\date{\today}
\newtheorem{theorem}{Theorem}[section]
\newtheorem{proposition}[theorem]{Proposition}
\newtheorem{corollary}[theorem]{Corollary}
\newtheorem{lemma}[theorem]{Lemma}
\theoremstyle{definition}
\newtheorem{remark}[theorem]{Remark}
\newtheorem{definition}[theorem]{Definition}
\begin{document}

\title[Congruences on bicyclic extensions of a linearly ordered
group~]{Congruences on bicyclic extensions of a linearly ordered
group}

\author[O. Gutik]{Oleg~Gutik}
\address{Department of Mechanics and Mathematics, Ivan Franko Lviv
National University, Universytetska 1, Lviv, 79000, Ukraine}
\email{o\_\,gutik@franko.lviv.ua, ovgutik@yahoo.com}

\author[D. Pagon]{Du\v{s}an~Pagon}
\address{Faculty of Natural Sciences and Mathematics, University of
Maribor, Gos\-posvetska 84, Maribor 2000, Slovenia}
\email{dusan.pagon@uni-mb.si}

\author[K. Pavlyk]{Kateryna~Pavlyk}
\address{Institute of Mathematics,
University of Tartu, J. Liivi 2, 50409, Tartu, Estonia}
\email{kateryna.pavlyk@ut.ee}

\keywords{Semigroup, inverse semigroup, bicyclic semigroup,
semigroup extension, linearly ordered group, group congruence,
H\"{o}lder's Theorem, Hahn Theorem}

\subjclass[2010]{Primary: 20M18, 20M20. Secondary: 06F15, 06F20,
20F60, 20M15}

\begin{abstract}
In the paper we study inverse semigroups $\mathscr{B}(G)$,
$\mathscr{B}^+(G)$, $\overline{\mathscr{B}}(G)$ and
$\overline{\mathscr{B}}\,^+(G)$ which are generated by partial
monotone injective translations of a positive cone of a linearly
ordered group $G$. We describe Green's relations on the semigroups
$\mathscr{B}(G)$, $\mathscr{B}^+(G)$, $\overline{\mathscr{B}}(G)$
and $\overline{\mathscr{B}}\,^+(G)$, their bands and show that they
are simple, and moreover the semigroups $\mathscr{B}(G)$ and
$\mathscr{B}^+(G)$ are bisimple. We show that for a commutative
linearly ordered group $G$ all non-trivial congruences on the
semigroup $\mathscr{B}(G)$ (and $\mathscr{B}^+(G)$) are group
congruences if and only if the group $G$ is archimedean. Also we
describe the structure of group congruences on the semigroups
$\mathscr{B}(G)$, $\mathscr{B}^+(G)$, $\overline{\mathscr{B}}(G)$
and $\overline{\mathscr{B}}\,^+(G)$.
\end{abstract}

\maketitle


\section{Introduction and main definitions}

In this article we shall follow the terminology of
\cite{Birkhoff1973, CliffordPreston1961-1967, Howie1995,
KokorinKopytov1972, Petrich1984}.

A \emph{semigroup} is a non-empty set with a binary associative
operation. A semigroup $S$ is called \emph{inverse} if for any $x\in
S$ there exists a unique $y\in S$ such that $x\cdot y\cdot x=x$ and
$y\cdot x\cdot y=y$. Such an element $y$ in $S$ is called the
\emph{inverse} of $x$ and denoted by $x^{-1}$. The map defined on an
inverse semigroup $S$ which maps every element $x$ of $S$ to its
inverse $x^{-1}$ is called the \emph{inversion}.

If $S$ is a semigroup, then we shall denote the subset of
idempotents in $S$ by $E(S)$. If $S$ is an inverse semigroup, then
$E(S)$ is closed under multiplication and we shall refer to $E(S)$
as the \emph{band of} $S$. If the band $E(S)$ is a non-empty subset
of $S$, then the semigroup operation on $S$ determines the following
partial order $\preccurlyeq$ on $E(S)$: $e\preccurlyeq f$ if and
only if $ef=fe=e$. This order is called the {\em natural partial
order} on $E(S)$. A \emph{semilattice} is a commutative semigroup of
idempotents. A semilattice $E$ is called {\em linearly ordered} or a
\emph{chain} if its natural order is a linear order.

If $\mathfrak{C}$ is an arbitrary congruence on a semigroup $S$,
then we denote by $\Phi_\mathfrak{C}\colon S\rightarrow
S/\mathfrak{C}$ the natural homomorphisms from $S$ onto the quotient
semigroup $S/\mathfrak{C}$. Also we denote by $\Omega_S$ and
$\Delta_S$ the \emph{universal} and the \emph{identity} congruences,
respectively, on the semigroup $S$, i.e., $\Omega(S)=S\times S$ and
$\Delta(S)=\{(s,s)\mid s\in S\}$. A congruence $\mathfrak{C}$ on a
semigroup $S$ is called \emph{non-trivial} if $\mathfrak{C}$ is
distinct from the universal and the identity congruence on $S$, and
a \emph{group congruence} if the quotient semigroup $S/\mathfrak{C}$
is a group. Every inverse semigroup $S$ admits a least  group
congruence $\mathfrak{C}_{mg}$:
\begin{equation*}
    a\mathfrak{C}_{mg}b \; \hbox{ if and only if there exists }\;
    e\in E(S) \; \hbox{ such that }\; ae=be
\end{equation*}
(see \cite[Lemma~III.5.2]{Petrich1984}).

A map $h\colon S\rightarrow T$ from a semigroup $S$ to a semigroup
$T$ is said to be an \emph{antihomomorphism} if $(a\cdot
b)h=(b)h\cdot(a)h$. A bijective antihomomorphism is called an
\emph{antiisomorphism}.

If $S$ is a semigroup, then we shall denote by $\mathscr{R}$,
$\mathscr{L}$, $\mathscr{J}$, $\mathscr{D}$ and $\mathscr{H}$ the
Green's relations on $S$ (see \cite{CliffordPreston1961-1967}):
\begin{align*}
    &\qquad a\mathscr{R}b \mbox{ if and only if } aS^1=bS^1;\\
    &\qquad a\mathscr{L}b \mbox{ if and only if } S^1a=S^1b;\\
    &\qquad a\mathscr{J}b \mbox{ if and only if } S^1aS^1=S^1bS^1;\\
    &\qquad \mathscr{D}=\mathscr{L}\circ\mathscr{R}=\mathscr{R}\circ
    \mathscr{L};\\
    &\qquad \mathscr{H}=\mathscr{L}\cap\mathscr{R}.
\end{align*}

Let $\mathscr{I}_X$ denote the set of all partial one-to-one
transformations of an infinite set $X$  together with the following
semigroup operation: $x(\alpha\beta)=(x\alpha)\beta$ if
$x\in\operatorname{dom}(\alpha\beta)=\{
y\in\operatorname{dom}\alpha\mid
y\alpha\in\operatorname{dom}\beta\}$,  for
$\alpha,\beta\in\mathscr{I}_X$. The semigroup $\mathscr{I}_X$ is
called the \emph{symmetric inverse semigroup} over the set $X$~(see
\cite{CliffordPreston1961-1967}). The symmetric inverse semigroup
was introduced by Wagner~\cite{Wagner1952} and it plays a major role
in the theory of semigroups.

The bicyclic semigroup ${\mathscr{C}}(p,q)$ is the semigroup with
the identity $1$ generated by two elements $p$ and $q$ subjected
only to the condition $pq=1$. The distinct elements of
${\mathscr{C}}(p,q)$ are exhibited in the following useful array
\begin{equation*}
\begin{array}{ccccc}
  1      & p      & p^2    & p^3    & \cdots \\
  q      & qp     & qp^2   & qp^3   & \cdots \\
  q^2    & q^2p   & q^2p^2 & q^2p^3 & \cdots \\
  q^3    & q^3p   & q^3p^2 & q^3p^3 & \cdots \\
  \vdots & \vdots & \vdots & \vdots & \ddots \\
\end{array}
\end{equation*}
and the semigroup operation on ${\mathscr{C}}(p,q)$ is determined as
follows:
\begin{equation*}
    q^kp^l\cdot q^mp^n=q^{k+m-\min\{l,m\}}p^{l+n-\min\{l,m\}}.
\end{equation*}
The bicyclic semigroup plays an important role in the algebraic
theory of semigroups and in the theory of topological semigroups.
For example the well-known O.~Andersen's result~\cite{Andersen1952}
states that a ($0$--) simple semigroup is completely ($0$--) simple
if and only if it does not contain the bicyclic semigroup. The
bicyclic semigroup does not embed into stable
semigroups~\cite{KochWallace1957}.

\begin{remark}\label{remark-1.0}
We observe that the bicyclic semigroup is isomorphic to the
semigroup $\mathscr{C}_{\mathbb{N}}(\alpha,\beta)$ which is
generated by injective partial transformations $\alpha$ and $\beta$
of the set of positive integers $\mathbb{N}$, defined as follows:
\begin{equation*}
 \begin{split}
    (n)\alpha=n+1 & \quad \mbox{ if } \, n\geqslant 1;\\
    (n)\beta=n-1  & \quad \mbox{ if } \, n> 1
\end{split}
\end{equation*}
(see Exercise~IV.1.11$(ii)$ in \cite{Petrich1984}).
\end{remark}

Recall from~\cite{Fuchs1963} that a \emph{partially-ordered group}
is a group $(G,\cdot)$ equipped with a partial order  $\leqslant$
that is translation-invariant; in other words,  $\leqslant$  has the
property that, for all $a, b, g\in G$, if $a\leqslant b$ then
$a\cdot g\leqslant b\cdot g$ and $g\cdot a\leqslant g\cdot b$.

Later by $e$ we denote the identity of a group $G$. The set
$G^+=\{x\in G\mid e\leqslant x\}$ in a partially ordered group $G$
is called the \emph{positive cone}, or the \emph{integral part}, of
$G$ and satisfies the properties:
\begin{itemize}
  \item[1)] $G^+\cdot G^+\subseteq G^+$;
  \item[2)] $G^+\cap (G^+)^{-1}=\{e\}$; \; and
  \item[3)] $x^{-1}\cdot G^+\cdot x\subseteq G^+$ for all $x\in G$.
\end{itemize}
Any subset $P$ of a group $G$ that satisfies the conditions 1)--3)
induces a partial order on $G$ ($x\leqslant y$ if and only if
$x^{-1}\cdot y\in P$) for which $P$ is the positive cone.

A \emph{linearly ordered} or \emph{totally ordered group} is an
ordered group $G$ such that the order relation ``$\leqslant$'' is
total~\cite{Birkhoff1973}.

In the remainder we shall assume that $G$ is a linearly ordered
group.

For every $g\in G$ we denote
\begin{equation*}
    G^+(g)=\{x\in G\mid g\leqslant x\}.
\end{equation*}
The set $G^+(g)$ is called a \emph{positive cone on element} $g$ in
$G$.

For arbitrary elements $g,h\in G$ we consider a partial map
$\alpha_h^g\colon G\rightharpoonup G$ defined by the formula
\begin{equation*}
    (x)\alpha_h^g=x\cdot g^{-1}\cdot h, \qquad \hbox{ for } \; x\in
    G^{+}(g).
\end{equation*}
We observe that Lemma~XIII.1 from \cite{Birkhoff1973} implies that
for such partial map $\alpha_h^g\colon G\rightharpoonup G$ the
restriction $\alpha_h^g\colon G^+(g)\rightarrow G^+(h)$ is a
bijective map.

We denote
\begin{equation*}
    \mathscr{B}(G)=\{\alpha_h^g\colon G\rightharpoonup G\mid g,h\in
    G\} \, \hbox{ and } \,
    \mathscr{B}^+(G)=\{\alpha_h^g\colon G\rightharpoonup G\mid g,h\in
    G^+\},
\end{equation*}
and consider on the sets $\mathscr{B}(G)$ and $\mathscr{B}^+(G)$ the
operation of the composition of partial maps. Simple verifications
show that
\begin{equation}\label{formula-1.1}
\alpha_h^g\cdot \alpha^k_l=\alpha^a_b, \qquad \hbox{ where } \quad
a=(h\vee k)\cdot h^{-1}\cdot g \quad \hbox{ and } \quad b=(h\vee
k)\cdot k^{-1}\cdot l,
\end{equation}
for $g,h,k,l\in G$. Therefore, property 1) of the positive cone and
condition~(\ref{formula-1.1}) imply that $\mathscr{B}(G)$ and
$\mathscr{B}^+(G)$ are subsemigroups of $\mathscr{I}_G$.

\begin{proposition}\label{proposition-1.1}
Let $G$ be a linearly ordered group. Then the following assertions
hold:
\begin{itemize}
  \item[$(i)$] elements $\alpha_h^g$ and $\alpha_g^h$ are inverses
   of each other in $\mathscr{B}(G)$ for all $g,h\in G$
   $($resp., $\mathscr{B}^+(G)$ for all $g,h\in G^+)$;

  \item[$(ii)$] an element $\alpha_h^g$ of the semigroup
   $\mathscr{B}(G)$ $($resp., $\mathscr{B}^+(G))$ is an
   idempotent if and only if $g=h$;

  \item[$(iii)$] $\mathscr{B}(G)$ and $\mathscr{B}^+(G)$ are inverse
   subsemigroups of $\mathscr{I}_G$;

  \item[$(iv)$] the semigroup $\mathscr{B}(G)$ $($resp.,
   $\mathscr{B}^+(G))$ is isomorphic to $S_G=G\times G$ $($resp.,
   $S_G^+=G^+\times G^+)$ with the semigroup operation:
  \begin{equation*}
  (a,b)\cdot(c,d)=
  \left\{
    \begin{array}{ll}
      (c\cdot b^{-1}\cdot a,d), & \hbox{if }  \; b<c;\\
      (a,d), & \hbox{if } \; b=c; \\
      (a,b\cdot c^{-1}\cdot d), & \hbox{if } \; b>c,
    \end{array}
  \right.
  \end{equation*}
  where $a,b,c,d\in G$ $($resp.,  $a,b,c,d\in G^+$$)$.
\end{itemize}
\end{proposition}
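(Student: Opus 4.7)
\smallskip
\noindent\textbf{Proof plan.} The entire proposition is driven by formula~(\ref{formula-1.1}); each of (i)--(iv) reduces to a case analysis on the factors $(h\vee k)$ that appear there. I would treat the four assertions in order, because each later one uses the earlier ones.

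For (i), I would substitute $(g,h,k,l)=(g,h,h,g)$ into~(\ref{formula-1.1}) to obtain
$\alpha_h^g\cdot\alpha_g^h=\alpha_{g}^{g}$ and, symmetrically, $\alpha_g^h\cdot\alpha_h^g=\alpha_h^h$; then applying~(\ref{formula-1.1}) once more gives $\alpha_h^g\cdot\alpha_{g}^{g}=\alpha_h^g$ and $\alpha_g^h\cdot\alpha_h^h=\alpha_g^h$, which are the two inverse-semigroup axioms $\alpha\beta\alpha=\alpha$, $\beta\alpha\beta=\beta$ with $\alpha=\alpha_h^g$ and $\beta=\alpha_g^h$. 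For (ii), if $g=h$, then~(\ref{formula-1.1}) yields $\alpha_g^g\cdot\alpha_g^g=\alpha_g^g$. Conversely, if $\alpha_h^g=\alpha_h^g\cdot\alpha_h^g$, then~(\ref{formula-1.1}) with $(g,h,k,l)=(g,h,g,h)$ gives $g=(h\vee g)h^{-1}g$ and $h=(h\vee g)g^{-1}h$, so both $h\vee g=h$ and $h\vee g=g$, forcing $g=h$. The same computations apply verbatim when the parameters are restricted to $G^+$.

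For (iii), regularity of $\mathscr{B}(G)$ and $\mathscr{B}^+(G)$ follows from (i). To upgrade regularity to the inverse property, I would verify directly that the idempotents commute: by~(\ref{formula-1.1}) we compute $\alpha_g^g\cdot\alpha_h^h=\alpha_{g\vee h}^{g\vee h}=\alpha_h^h\cdot\alpha_g^g$, using that $(g\vee h)g^{-1}g=g\vee h=(g\vee h)h^{-1}h$. Since $\mathscr{B}(G)$ and $\mathscr{B}^+(G)$ are already known to be subsemigroups of $\mathscr{I}_G$ and they contain the inverse of each of their elements by (i), they are inverse subsemigroups.

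For (iv), I would define $\Psi\colon\mathscr{B}(G)\to S_G$ by $\Psi(\alpha_h^g)=(g,h)$ and similarly for $\mathscr{B}^+(G)\to S_G^+$. Injectivity follows from $\operatorname{dom}\alpha_h^g=G^+(g)$ and $\operatorname{ran}\alpha_h^g=G^+(h)$ (so $g$ and $h$ are recoverable as the minima); surjectivity is obvious. For the homomorphism property I would split by the sign of $b-c$: since $G$ is linearly ordered, $b\vee c$ equals $c$, $b$, or the common value according as $b<c$, $b>c$, or $b=c$, and in each case formula~(\ref{formula-1.1}) applied to $\alpha_b^a\cdot\alpha_d^c$ specialises exactly to the corresponding branch of the three-case formula for $(a,b)\cdot(c,d)$ in the statement. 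The only mild obstacle is bookkeeping in this case split, but no ingredient beyond~(\ref{formula-1.1}) and the total ordering is required.
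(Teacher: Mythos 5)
Your plan follows the paper's proof almost verbatim: everything is extracted from formula~(\ref{formula-1.1}), part $(iii)$ rests on regularity plus commuting idempotents (precisely the Clifford--Preston criterion the paper invokes), and $(iv)$ is the same bijection checked case by case. Your $(ii)$, $(iii)$ and $(iv)$ are correct; in $(ii)$ you argue the converse by comparing parameters instead of quoting the fact that idempotents of $\mathscr{I}_G$ are identity maps on their domains, which is legitimate because $\operatorname{dom}\alpha_h^g=G^+(g)$ and $\operatorname{ran}\alpha_h^g=G^+(h)$ determine $g$ and $h$ uniquely.

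There is, however, a concrete error in $(i)$: the two ``second-step'' identities are paired wrongly and are false in general. Having computed $\alpha_h^g\cdot\alpha_g^h=\alpha_g^g$ and $\alpha_g^h\cdot\alpha_h^g=\alpha_h^h$, the product $\alpha_h^g\cdot\alpha_g^h\cdot\alpha_h^g$ equals $\alpha_g^g\cdot\alpha_h^g$ (or, bracketing the other way, $\alpha_h^g\cdot\alpha_h^h$), not $\alpha_h^g\cdot\alpha_g^g$ as you wrote. Indeed, by~(\ref{formula-1.1}) one has $\alpha_h^g\cdot\alpha_g^g=\alpha_{h\vee g}^{(h\vee g)\cdot h^{-1}\cdot g}$, which equals $\alpha_h^g$ only when $h\geqslant g$; for $h<g$ it is $\alpha_{g}^{g\cdot h^{-1}\cdot g}\neq\alpha_h^g$. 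Symmetrically, $\alpha_g^h\cdot\alpha_h^h=\alpha_g^h$ fails when $g<h$, so whenever $g\neq h$ at least one of your two displayed identities is wrong. The repair is immediate and restores exactly the paper's computation: $\alpha_g^g\cdot\alpha_h^g=\alpha_h^g$ and $\alpha_h^h\cdot\alpha_g^h=\alpha_g^h$ follow from~(\ref{formula-1.1}) because in each case the inner indices coincide, so $h\vee k$ collapses. With this correction your proof of $(i)$, and hence of the whole proposition, is complete and coincides with the paper's.
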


\begin{proof}
$(i)$ Condition ~(\ref{formula-1.1}) implies that
\begin{equation*}
\alpha_h^g\cdot\alpha_g^h\cdot\alpha_h^g=\alpha_h^g \qquad
\hbox{and} \qquad
\alpha_g^h\cdot\alpha_h^g\cdot\alpha_g^h=\alpha_g^h,
\end{equation*}
and hence $\alpha_h^g$ and $\alpha_g^h$ are inverse elements in
$\mathscr{B}(G)$ (resp., $\mathscr{B}^+(G)$).

Statement $(ii)$ follows from the property of the semigroup
$\mathscr{I}_G$ that $\alpha\in\mathscr{I}_G$ is an idempotent if
and only if $\alpha\colon\operatorname{dom}\alpha\rightarrow
\operatorname{ran}\alpha$ is an identity map.

Statements $(i)$, $(ii)$ and Theorem~1.17 from
\cite{CliffordPreston1961-1967} imply statement $(iii)$.

Statement $(iv)$ is a corollary of condition~(\ref{formula-1.1}).
\end{proof}

\begin{remark}\label{remark-1.2}
We observe that Proposition~\ref{proposition-1.1} implies that:
\begin{itemize}
  \item[$(1)$] if $G$ is the additive group of integers
  $(\mathbb{Z},+)$ with usual linear order $\leqslant$ then the
  semigroup $\mathscr{B}^+(G)$ is isomorphic to the bicyclic
  semigroup ${\mathscr{C}}(p,q)$;

  \item[$(2)$] if $G$ is the additive group of real numbers
  $(\mathbb{R},+)$ with usual linear order $\leqslant$ then the
  semigroup $\mathscr{B}(G)$ is isomorphic to
  $B^{2}_{(-\infty,\infty)}$ (see \cite{Korkmaz1997, Korkmaz2009})
  and the semigroup $\mathscr{B}^+(G)$ is isomorphic to
  $B^{1}_{[0,\infty)}$ (see
  \cite{Ahre1981, Ahre1983, Ahre1984, Ahre1986, Ahre1989}) \; and

  \item[$(3)$] the semigroup $\mathscr{B}^+(G)$ is isomorphic to
  the semigroup $S(G)$ which is defined in \cite{Fotedar1974,
  Fotedar1978}.
\end{itemize}
\end{remark}

We shall say that a linearly ordered group $G$ is a \emph{$d$-group}
if for every element $g\in G^+\setminus\{e\}$ there exists $x\in
G^+\setminus\{e\}$ such that $x<g$. We observe that a linearly
ordered group $G$ is a $d$-group if and only if the set
$G^+\setminus\{e\}$ does not contain a minimal element.

\begin{definition}\label{definition-1.3}
Suppose that $G$ is a linearly ordered $d$-group. For every $g\in G$
we denote
\begin{equation*}
    \overset{_\circ}{G}{}^+(g)=\{x\in G\mid g<x\}.
\end{equation*}
The set $\overset{_\circ}{G}{}^+(g)$ is called a
\emph{$\circ$-positive cone on element} $g$ in $G$.

For arbitrary elements $g,h\in G$ we consider a partial map
$\overset{_\circ}{\alpha}{}^{g}_h\colon G\rightharpoonup G$ defined
by the formula
\begin{equation*}
    (x)\overset{_\circ}{\alpha}{}^{g}_h=x\cdot g^{-1}\cdot h,
    \qquad \hbox{ for } \; x\in\overset{_\circ}{G}{}^+(g).
\end{equation*}
We observe that Lemma~XIII.1 from \cite{Birkhoff1973} implies that
for such partial map $\overset{_\circ}{\alpha}{}^{g}_h\colon
G\rightharpoonup G$ the restriction
$\overset{_\circ}{\alpha}{}^{g}_h \colon \overset{_\circ}{G}{}^+(g)
\rightarrow\overset{_\circ}{G}{}^+(h)$ is a bijective map.

We denote
\begin{equation*}
    \overset{_\circ}{\mathscr{B}}(G)=
    \left\{\overset{_\circ}{\alpha}{}^g_h\colon G\rightharpoonup G
    \mid g,h\in
    G\right\} \, \hbox{ and } \,
    \overset{_\circ}{\mathscr{B}}{}^+(G)=
    \left\{\overset{_\circ}{\alpha}{}^g_h\colon G\rightharpoonup G
    \mid g,h\in
    G^+\right\},
\end{equation*}
and consider on the sets $\overset{_\circ}{\mathscr{B}}(G)$ and
$\overset{_\circ}{\mathscr{B}}{}^+(G)$ the operation of the
composition of partial maps. Simple verifications show that
\begin{equation}\label{formula-1.2}
\overset{_\circ}{\alpha}{}^g_h\cdot
\overset{_\circ}{\alpha}{}^k_l=\overset{_\circ}{\alpha}{}^a_b,
\qquad \hbox{ where } \quad a=(h\vee k)\cdot h^{-1}\cdot g \quad
\hbox{ and } \quad b=(h\vee k)\cdot k^{-1}\cdot l,
\end{equation}
for $g,h,k,l\in G$. Therefore, property 1) of the positive cone and
condition~(\ref{formula-1.2}) imply that
$\overset{_\circ}{\mathscr{B}}(G)$ and
$\overset{_\circ}{\mathscr{B}}{}^+(G)$ are subsemigroups of the
symmetric inverse semigroup $\mathscr{I}_G$.
\end{definition}

\begin{proposition}\label{proposition-1.4}
If $G$ is a linearly ordered $d$-group then the semigroups
$\overset{_\circ}{\mathscr{B}}(G)$ and
$\overset{_\circ}{\mathscr{B}}{}^+(G)$ are isomorphic to
$\mathscr{B}(G)$ and $\mathscr{B}^+(G)$, respectively.
\end{proposition}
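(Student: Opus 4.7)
The plan is to write down the obvious assignment
\[
\Phi\colon\mathscr{B}(G)\to\overset{_\circ}{\mathscr{B}}(G),\qquad \alpha_h^g\longmapsto\overset{_\circ}{\alpha}{}^{g}_{h},
\]
and check it is a semigroup isomorphism; the restriction to $\mathscr{B}^+(G)$ will land in $\overset{_\circ}{\mathscr{B}}{}^+(G)$ by the condition $g,h\in G^+$, giving the second isomorphism. The whole argument rests on the elementary observation that the multiplication rules (\ref{formula-1.1}) and (\ref{formula-1.2}) are syntactically identical; only the domains of the underlying partial maps differ ($G^+(g)$ vs.\ $\overset{_\circ}{G}{}^+(g)$).

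First I would verify that the parameterizations $(g,h)\mapsto\alpha_h^g$ and $(g,h)\mapsto\overset{_\circ}{\alpha}{}^g_h$ are both injective, so that $\Phi$ is a well-defined bijection. Injectivity on the $\mathscr{B}(G)$ side is already contained in Proposition~\ref{proposition-1.1}(iv). For the $\circ$-side I would argue directly: if $\overset{_\circ}{\alpha}{}^{g_1}_{h_1}=\overset{_\circ}{\alpha}{}^{g_2}_{h_2}$, then their domains $\overset{_\circ}{G}{}^+(g_1)$ and $\overset{_\circ}{G}{}^+(g_2)$ must coincide; but if $g_1<g_2$, then $g_2\in\overset{_\circ}{G}{}^+(g_1)\setminus\overset{_\circ}{G}{}^+(g_2)$, a contradiction, so $g_1=g_2$, and then evaluating the common partial map at any $x$ in the (nonempty, since $G$ is a $d$-group) common domain forces $h_1=h_2$ from $xg^{-1}h_1=xg^{-1}h_2$.

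Next I would verify that $\Phi$ is a homomorphism. Given $g,h,k,l\in G$, formula~(\ref{formula-1.1}) gives $\alpha_h^g\cdot\alpha_l^k=\alpha_b^a$ with $a=(h\vee k)h^{-1}g$ and $b=(h\vee k)k^{-1}l$, so
\[
\Phi(\alpha_h^g\cdot\alpha_l^k)=\overset{_\circ}{\alpha}{}^{a}_{b},
\]
while formula~(\ref{formula-1.2}) yields
\[
\Phi(\alpha_h^g)\cdot\Phi(\alpha_l^k)=\overset{_\circ}{\alpha}{}^{g}_{h}\cdot\overset{_\circ}{\alpha}{}^{k}_{l}=\overset{_\circ}{\alpha}{}^{a}_{b}
\]
with the same $a$ and $b$. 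The two expressions coincide, so $\Phi$ is a homomorphism, hence an isomorphism.

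Finally, the restriction of $\Phi$ to $\mathscr{B}^+(G)$ has image exactly $\overset{_\circ}{\mathscr{B}}{}^+(G)$, since the condition $g,h\in G^+$ defines both subsemigroups, and the same verification then yields the isomorphism $\mathscr{B}^+(G)\cong\overset{_\circ}{\mathscr{B}}{}^+(G)$. There is no genuine obstacle here: the only subtlety worth mentioning is the injectivity of the parameterization $(g,h)\mapsto\overset{_\circ}{\alpha}{}^g_h$, which is precisely where the $d$-group hypothesis (ensuring nonempty $\circ$-positive cones) enters.
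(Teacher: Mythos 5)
Your proposal is correct and follows essentially the same route as the paper: the paper's proof consists of exactly the map $(\alpha^g_h)\mapsto\overset{_\circ}{\alpha}{}^g_h$ together with the remark that ``simple verifications'' show it is an isomorphism, and your write-up merely supplies those verifications (injectivity of the parameterization and the coincidence of the multiplication formulas (\ref{formula-1.1}) and (\ref{formula-1.2})). No further comment is needed.
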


\begin{proof}
A map $\mathfrak{h}\colon\mathscr{B}(G)
\rightarrow\overset{_\circ}{\mathscr{B}}(G)$ (resp.,
$\mathfrak{h}\colon \mathscr{B}^+(G)
\rightarrow\overset{_\circ}{\mathscr{B}}{}^+(G)$) we define by the
formula:
\begin{equation*}
    (\alpha^g_h)\mathfrak{h}=\overset{_\circ}{\alpha}{}^g_h, \qquad
    \hbox{ for } \; g,h\in G \; (\hbox{resp.,~}g,h\in G^+).
\end{equation*}
Simple verifications show that such map $\mathfrak{h}$ is an
isomorphism of the semigroups $\overset{_\circ}{\mathscr{B}}(G)$ and
$\mathscr{B}(G)$ (resp., $\overset{_\circ}{\mathscr{B}}{}^+(G)$ and
$\mathscr{B}^+(G)$).
\end{proof}

Suppose that $G$ is a linearly ordered $d$-group. Then obviously
$\overset{_\circ}{\mathscr{B}}(G)\cap{\mathscr{B}}(G)= \varnothing$
and $\overset{_\circ}{\mathscr{B}}{}^+(G)\cap
\mathscr{B}^+(G)=\varnothing$. We define
\begin{equation*}
    \overline{\mathscr{B}}(G)=\overset{_\circ}{\mathscr{B}}(G)
    \cup{\mathscr{B}}(G) \qquad \hbox{ and } \qquad
    \overline{\mathscr{B}}\,^+(G)=\overset{_\circ}{\mathscr{B}}{}^+(G)
    \cup \mathscr{B}^+(G).
\end{equation*}

\begin{proposition}\label{proposition-1.5}
If $G$ is a linearly ordered $d$-group then
$\overline{\mathscr{B}}(G)$ and $\overline{\mathscr{B}}\,^+(G)$ are
inverse semigroups.
\end{proposition}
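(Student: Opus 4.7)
The plan is to prove the statement for $\overline{\mathscr{B}}(G)$; the argument for $\overline{\mathscr{B}}\,^+(G)$ will be analogous. Since $\mathscr{B}(G)$ and $\overset{_\circ}{\mathscr{B}}(G)$ are inverse subsemigroups of $\mathscr{I}_G$ by Propositions~\ref{proposition-1.1} and~\ref{proposition-1.4}, it will suffice, via Theorem~1.17 of~\cite{CliffordPreston1961-1967}, to check that their union is closed under composition and under inversion inside $\mathscr{I}_G$.

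Closure under inversion is immediate: the inverse of $\alpha^g_h$ is $\alpha^h_g$, and the inverse of $\overset{_\circ}{\alpha}{}^g_h$ is $\overset{_\circ}{\alpha}{}^h_g$, so neither type of element leaves $\overline{\mathscr{B}}(G)$ upon inversion.

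For closure under composition, the products of two elements of the same kind are already accounted for by formulas~(\ref{formula-1.1}) and~(\ref{formula-1.2}). Thus only the mixed products $\alpha_h^g\cdot\overset{_\circ}{\alpha}{}^k_l$ and $\overset{_\circ}{\alpha}{}^k_l\cdot\alpha_h^g$ need to be inspected. For the first, the underlying partial map sends $x\mapsto x\cdot g^{-1}\cdot h\cdot k^{-1}\cdot l$, and its domain is
\[
\{x\in G \mid g\leqslant x\}\cap \{x\in G \mid k\cdot h^{-1}\cdot g< x\}.
\]
The shape of this intersection is determined by comparing $g$ and $k\cdot h^{-1}\cdot g$, i.e.\ by comparing $h$ and $k$: if $h>k$, the closed boundary dominates and one obtains $\alpha^g_{h\cdot k^{-1}\cdot l}\in\mathscr{B}(G)$; if $h<k$, the open boundary dominates and the product is $\overset{_\circ}{\alpha}{}^{k\cdot h^{-1}\cdot g}_l\in\overset{_\circ}{\mathscr{B}}(G)$; if $h=k$, the two boundaries coincide at $g$ but the strict one wins and the product is $\overset{_\circ}{\alpha}{}^g_l\in\overset{_\circ}{\mathscr{B}}(G)$. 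The reverse product $\overset{_\circ}{\alpha}{}^k_l\cdot\alpha_h^g$ is handled by an exactly symmetric three-way split, this time on the comparison of $l$ and $g$.

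The main obstacle is the bookkeeping in this case analysis, and in particular the borderline case $h=k$: whenever a strict and a non-strict inequality share their endpoint, the intersection inherits the strict one, which is precisely what forces the product into $\overset{_\circ}{\mathscr{B}}(G)$ rather than $\mathscr{B}(G)$. For $\overline{\mathscr{B}}\,^+(G)$, one verifies in addition that each new parameter (for instance $h\cdot k^{-1}\cdot l$ in the case $h>k$, or $k\cdot h^{-1}\cdot g$ in the case $h<k$) lies in $G^+$ whenever $g,h,k,l\in G^+$; this is a direct consequence of property~1) of the positive cone together with the defining inequality of the case.
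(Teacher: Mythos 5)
Your proposal is correct and follows essentially the same route as the paper: both reduce the claim to showing that the union is a subsemigroup of $\mathscr{I}_G$ (inversion closure being automatic since each piece is an inverse subsemigroup) and then verify the two mixed products by the three-way comparison of the inner indices, with the strict boundary winning in the borderline case. Your case analysis reproduces exactly the displayed formulas in the paper's proof, including the observation that the $G^+$ version follows from property 1) of the positive cone.
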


\begin{proof}
Since $\overset{_\circ}{\mathscr{B}}(G)$, $\mathscr{B}(G)$,
$\overset{_\circ}{\mathscr{B}}{}^+(G)$ and $\mathscr{B}^+(G)$ are
inverse subsemigroups of the symmetric inverse semigroup
$\mathscr{I}_G$ over the group $G$ we conclude that it is sufficient
to show that $\overline{\mathscr{B}}(G)$ and
$\overline{\mathscr{B}}\,^+(G)$ are subsemigroups of
$\mathscr{I}_G$.

We fix arbitrary elements $g,h,k,l\in G$. Since $\alpha^g_h$,
$\alpha^k_l$, $\overset{_\circ}{\alpha}{}^g_h$ and
$\overset{_\circ}{\alpha}{}^k_l$ are partial injective maps from $G$
into $G$ we have that
\begin{equation*}
    \alpha^g_h\cdot\overset{_\circ}{\alpha}{}^k_l=
\left\{
  \begin{array}{rl}
    \overset{_\circ}{\alpha}{}^{k\cdot h^{-1}\cdot g}_l,
                & \hbox{ if }\; h<k; \\
    \overset{_\circ}{\alpha}{}^g_l,
                & \hbox{ if }\; h=k; \\
    \alpha^g_{h\cdot k^{-1}\cdot l},
                & \hbox{ if }\; h>k
  \end{array}
\right. \, \hbox{ and~ } \,
    \overset{_\circ}{\alpha}{}^g_h\cdot\alpha^k_l=
\left\{
  \begin{array}{rl}
    \alpha^{k\cdot h^{-1}\cdot g}_l,
                & \hbox{ if }\; h<k; \\
    \overset{_\circ}{\alpha}{}^g_l,
                & \hbox{ if }\; h=k; \\
    \overset{_\circ}{\alpha}{}^g_{h\cdot k^{-1}\cdot l},
                & \hbox{ if }\; h>k.
  \end{array}
\right.
\end{equation*}
Hence $\overline{\mathscr{B}}(G)$ is a subsemigroup of
$\mathscr{I}_G$.

Similar arguments and property 1) of the positive cone imply that
$\overline{\mathscr{B}}\,^+(G)$ is a subsemigroup of
$\mathscr{I}_G$. This completes the proof of our proposition.
\end{proof}

In our paper we study semigroups $\mathscr{B}(G)$ and
$\mathscr{B}^+(G)$ for a linearly ordered group $G$, and semigroups
$\overline{\mathscr{B}}(G)$ and $\overline{\mathscr{B}}\,^+(G)$ for
a linearly ordered $d$-group $G$. We describe Green's relations on
the semigroups $\mathscr{B}(G)$, $\mathscr{B}^+(G)$,
$\overline{\mathscr{B}}(G)$ and $\overline{\mathscr{B}}\,^+(G)$,
their bands and show that they are simple, and moreover the
semigroups $\mathscr{B}(G)$ and $\mathscr{B}^+(G)$ are bisimple. We
show that for a commutative linearly ordered group $G$ all
non-trivial congruences on the semigroup $\mathscr{B}(G)$ (and
$\mathscr{B}^+(G)$) are group congruences if and only if the group
$G$ is archimedean. Also, we describe the structure of group
congruences on the semigroups $\mathscr{B}(G)$, $\mathscr{B}^+(G)$,
$\overline{\mathscr{B}}(G)$ and $\overline{\mathscr{B}}\,^+(G)$.

\section{Algebraic properties of the semigroups $\mathscr{B}(G)$ and
$\mathscr{B}^+(G)$}

\begin{proposition}\label{proposition-2.1} Let $G$ be a linearly
ordered group. Then the following assertions hold:
\begin{itemize}
  \item[$(i)$] if $\alpha^g_g,\alpha^h_h\in E(\mathscr{B}(G))$
   $\big($resp., $\alpha^g_g,\alpha^h_h\in
   E(\mathscr{B}^+(G))$$\big)$,
       then $\alpha^g_g\preccurlyeq\alpha^h_h$ if and only if
       $g\geqslant h$ in $G$ $\big($resp., in $G^+$$\big)$;

  \item[$(ii)$] the semilattice $E(\mathscr{B}(G))$
   $\big($resp., $E(\mathscr{B}^+(G))$$\big)$ is isomorphic to
       $G$  $\big($resp., $G^+$$\big)$, considered as a $\vee$-semilattice
       under the mapping
       $(\alpha^g_g)\mathfrak{i}=g$;

  \item[$(iii)$] $\alpha^g_h\mathscr{R}\alpha^k_l$ in
   $\mathscr{B}(G)$ $\big($resp., in $\mathscr{B}^+(G)$$\big)$
   if and only if $g=k$ in $G$ $\big($resp., in $G^+$$\big)$;

  \item[$(iv)$] $\alpha^g_h\mathscr{L}\alpha^k_l$ in
       $\mathscr{B}(G)$ $\big($resp., in $\mathscr{B}^+(G)$$\big)$
       if and only if $h=l$ in $G$ $\big($resp., in $G^+$$\big)$;

  \item[$(v)$] $\alpha^g_h\mathscr{H}\alpha^k_l$ in
       $\mathscr{B}(G)$ $\big($resp., in $\mathscr{B}^+(G)$$\big)$
       if and only
       if $g=k$ and $h=l$ in $G$ $\big($resp., in $G^+$$\big)$, and
       hence every $\mathscr{H}$-class in $\mathscr{B}(G)$
       $\big($resp., in $\mathscr{B}^+(G)$$\big)$ is a singleton set;

  \item[$(vi)$] $\alpha^g_h\mathscr{D}\alpha^k_l$ in
       $\mathscr{B}(G)$ $\big($resp., in $\mathscr{B}^+(G)$$\big)$
       for all $g,h,k,l\in G$ and
       hence $\mathscr{B}(G)$ $\big($resp., $\mathscr{B}^+(G)$$\big)$
       is a bisimple semigroup;

  \item[$(vii)$] $\mathscr{B}(G)$ $\big($resp.,
   $\mathscr{B}^+(G)$$\big)$ is a simple
   semigroup.
\end{itemize}
\end{proposition}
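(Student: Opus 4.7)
The proposition is essentially seven corollaries of one key calculation: formula~(\ref{formula-1.1}) applied to a product of two idempotents gives $\alpha^g_g \cdot \alpha^h_h = \alpha^{g \vee h}_{g \vee h}$, since both ``exponents'' produced by the formula collapse to $g \vee h$. Part $(i)$ is then immediate, as $\alpha^g_g \preccurlyeq \alpha^h_h$ means $\alpha^g_g \cdot \alpha^h_h = \alpha^g_g$, i.e.\ $g \vee h = g$, which is equivalent to $g \geqslant h$. Part $(ii)$ follows because the map $\mathfrak{i}\colon \alpha^g_g \mapsto g$ is clearly bijective and the displayed product shows it carries the band operation to the join $\vee$ in $G$ (respectively in $G^+$).

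For the Green's relations $(iii)$--$(v)$ I would bypass the ambient semigroup $\mathscr{I}_G$ and use the standard characterizations in an inverse semigroup: $a \mathscr{R} b$ iff $a a^{-1} = b b^{-1}$ and $a \mathscr{L} b$ iff $a^{-1} a = b^{-1} b$. Proposition~\ref{proposition-1.1}$(i)$ identifies $(\alpha^g_h)^{-1}$ as $\alpha^h_g$, and two applications of formula~(\ref{formula-1.1}) give $\alpha^g_h \cdot \alpha^h_g = \alpha^g_g$ and $\alpha^h_g \cdot \alpha^g_h = \alpha^h_h$. Hence $\alpha^g_h \mathscr{R} \alpha^k_l$ is equivalent to $\alpha^g_g = \alpha^k_k$, which by $(ii)$ forces $g = k$, proving $(iii)$. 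The dual argument proves $(iv)$, and $(v)$ is just $\mathscr{H} = \mathscr{R} \cap \mathscr{L}$.

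Parts $(vi)$ and $(vii)$ then follow cheaply. Given any $\alpha^g_h$ and $\alpha^k_l$ in $\mathscr{B}(G)$ (respectively $\mathscr{B}^+(G)$, where all four indices are automatically in $G^+$), the element $\alpha^g_l$ is a witness: $\alpha^g_h \mathscr{R} \alpha^g_l$ by $(iii)$ and $\alpha^g_l \mathscr{L} \alpha^k_l$ by $(iv)$, so $\mathscr{D} = \mathscr{R} \circ \mathscr{L}$ is the universal relation and the semigroup is bisimple. For $(vii)$, bisimplicity combined with the general inclusion $\mathscr{D} \subseteq \mathscr{J}$ forces a single $\mathscr{J}$-class, which is equivalent to simplicity. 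There is no genuine obstacle in the argument; the one thing to keep an eye on is that in the $\mathscr{B}^+(G)$ case all constructed witnesses stay inside $G^+$, but since the indices involved are already positive this is automatic.
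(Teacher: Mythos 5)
Your proposal is correct and follows essentially the same route as the paper: both rest on the computations $\alpha^g_h\cdot\alpha^h_g=\alpha^g_g$ and $\alpha^h_g\cdot\alpha^g_h=\alpha^h_h$ together with the standard inverse-semigroup characterizations of $\mathscr{R}$ and $\mathscr{L}$ via idempotents (the paper cites Theorem~1.17 of Clifford--Preston for this), and both deduce $(vii)$ from $\mathscr{D}\subseteq\mathscr{J}$. The only cosmetic differences are that you spell out $(i)$--$(ii)$ via $\alpha^g_g\cdot\alpha^h_h=\alpha^{g\vee h}_{g\vee h}$ where the paper calls them trivial, and you prove $(vi)$ by exhibiting the witness $\alpha^g_l$ directly instead of invoking Munn's Lemma~1.1, which encapsulates exactly that construction.
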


\begin{proof}
Statements $(i)$ and $(ii)$ are trivial and follow from the
definition of the semigroup $\mathscr{B}(G)$.

$(iii)$ Let $\alpha^g_h,\alpha^k_l\in\mathscr{B}(G)$ be such that
$\alpha^g_h\mathscr{R}\alpha^k_l$. Since $\alpha^g_h\mathscr{B}(G)=
\alpha^k_l\mathscr{B}(G)$ and $\mathscr{B}(G)$ is an inverse
semigroup, Theorem~1.17 from \cite{CliffordPreston1961-1967} implies
that $\alpha^g_h\mathscr{B}(G)=
\alpha^g_h(\alpha^g_h)^{-1}\mathscr{B}(G)$ and
$\alpha^k_l\mathscr{B}(G)=
\alpha^k_l(\alpha^k_l)^{-1}\mathscr{B}(G)$, and hence
$\alpha^g_g=\alpha^g_h(\alpha^g_h)^{-1}=
\alpha^k_l(\alpha^k_l)^{-1}=\alpha^k_k$. Therefore we get that
$g=k$.

Conversely, let $\alpha^g_h,\alpha^k_l\in\mathscr{B}(G)$ be such
that $g=k$. Then $\alpha^g_h(\alpha^g_h)^{-1}=
\alpha^k_l(\alpha^k_l)^{-1}$. Since $\mathscr{B}(G)$ is an inverse
semigroup, Theorem~1.17 from \cite{CliffordPreston1961-1967} implies
that $\alpha^g_h\mathscr{B}(G)=
\alpha^g_h(\alpha^g_h)^{-1}\mathscr{B}(G)= \alpha^k_l\mathscr{B}(G)$
and hence $\alpha^g_h\mathscr{R}\alpha^k_l$ in $\mathscr{B}(G)$.

The proof of statement $(iv)$ is similar to $(iii)$.

Statement $(v)$ follows from statements $(iii)$ and $(iv)$.

$(vi)$ For every $g,h\in\mathscr{B}(G)$ we have that
$\alpha^g_h(\alpha^g_h)^{-1}=\alpha^g_g$ and
$(\alpha^g_h)^{-1}\alpha^g_h=\alpha^h_h$, and hence by statement
$(ii)$, Proposition~\ref{proposition-1.1} and Lemma~1.1 from
\cite{Munn1966} we get that $\mathscr{B}(G)$ is a bisimple
semigroup.

$(vii)$ Since every two $\mathscr{D}$-equivalent elements of an
arbitrary semigroup $S$ are $\mathscr{J}$-equivalent (see
\cite[Section~2.1]{CliffordPreston1961-1967}) we have that
$\mathscr{B}(G)$ is a simple semigroup.

The proof of the proposition for the semigroup $\mathscr{B}^+(G)$ is
similar.
\end{proof}

Given two partially ordered sets $(A,\leqslant_A)$ and
$(B,\leqslant_B)$, the \emph{lexicographical order}
$\leqslant_\textbf{lex}$ on the Cartesian product $A\times B$ is
defined as follows:
\begin{equation*}
   (a,b)\leqslant_\textbf{lex} (a^{\prime},b^{\prime}) \qquad
   \hbox{ if and only if } \qquad a<_A a^{\prime} \quad
   \hbox{ or } \quad (a = a^{\prime} \hbox{ and }
   b\leqslant_B b^{\prime}).
\end{equation*}
In this case we shall say that the partially ordered set $(A\times
B,\leqslant_\textbf{lex})$ is the \emph{lexicographic product} of
partially ordered sets $(A,\leqslant_A)$ and $(B,\leqslant_B)$ and
it is denoted by $A\times_\textbf{lex} B$. We observe that the
lexicographic product of two linearly ordered sets is a linearly
ordered set.

\begin{proposition}\label{proposition-2.8} Let $G$ be a linearly
ordered $d$-group. Then the following assertions hold:
\begin{itemize}
  \item[$(i)$] $E\left(\overline{\mathscr{B}}(G)\right)=
       E(\mathscr{B}(G))\cup
       E\big(\overset{_\circ}{\mathscr{B}}(G)\big)$ and
       $E\left(\overline{\mathscr{B}}\,^+(G)\right)=
       E(\mathscr{B}\,^+(G))\cup
       E\big(\overset{_\circ}{\mathscr{B}}\,^+(G)\big)$.

  \item[$(ii)$] If $\alpha^g_g,\overset{_\circ}{\alpha}{}^g_g,
       \alpha^h_h, \overset{_\circ}{\alpha}{}^h_h\in
       E\left(\overline{\mathscr{B}}(G)\right)$ $\big($resp.,
       $\alpha^g_g,\overset{_\circ}{\alpha}{}^g_g,
       \alpha^h_h, \overset{_\circ}{\alpha}{}^h_h\in
       E\left(\overline{\mathscr{B}}\,^+(G)\right)$$\big)$ then:
    \begin{itemize}
      \item[(a)] $\alpha^g_g\preccurlyeq\alpha^h_h$ if and only if
           $g\geqslant h$ in $G$ $\big($resp., in $G^+$$\big)$;
      \item[(b)] $\overset{_\circ}{\alpha}{}^g_g\preccurlyeq
           \overset{_\circ}{\alpha}{}^h_h$ if and only if
           $g\geqslant h$ in $G$ $\big($resp., in $G^+$$\big)$;
      \item[(c)] $\alpha^g_g\preccurlyeq
           \overset{_\circ}{\alpha}{}^h_h$ if and only if $g>h$ in
           $G$ $\big($resp., in $G^+$$\big)$;
      \item[(d)] $\overset{_\circ}{\alpha}{}^g_g\preccurlyeq
           \alpha^h_h$ if and only if $g\geqslant h$ in $G$
           $\big($resp., in $G^+$$\big)$.
    \end{itemize}

  \item[$(iii)$] The semilattice
       $E\left(\overline{\mathscr{B}}(G)\right)$
       $\big($resp., $E\left(\overline{\mathscr{B}}\,^+(G)\right)$$\big)$
       is isomorphic to the lexicographic product
       $G\times_{\operatorname{\textbf{lex}}}
       \{0,1\}$ $\big($resp., $G^+\times_{\operatorname{\textbf{lex}}}
       \{0,1\}$$\big)$ of semilattices $(G,\vee)$ $\big($resp.,
       $(G^+,\vee)$$\big)$ and $(\{0,1\},\min)$ under
       the mapping $(\alpha^g_g)\mathfrak{i}=(g,1)$ and
       $(\overset{_\circ}{\alpha}{}^g_g)\mathfrak{i}=(g,0)$,
       and hence $E\left(\overline{\mathscr{B}}(G)\right)$
       $\big($resp.,\break
       $E\left(\overline{\mathscr{B}}\,^+(G)\right)$$\big)$ is a
       linearly ordered semilattice.

  \item[$(iv)$] The elements $\alpha$ and $\beta$ of the semigroup
      $\overline{\mathscr{B}}(G)$ $\big($resp.,
      $\overline{\mathscr{B}}\,^+(G)$$\big)$ are
      $\mathscr{R}$-equivalent in
      $\overline{\mathscr{B}}(G)$ $\big($resp., in
      $\overline{\mathscr{B}}\,^+(G)$$\big)$ provides either
      $\alpha,\beta\in\mathscr{B}(G)$ $\big($resp.,
      $\alpha,\beta\in\overline{\mathscr{B}}\,^+(G)$$\big)$ or
      $\alpha,\beta\in\overset{_\circ}{\mathscr{B}}(G)$
      $\big($resp.,
      $\alpha,\beta\in\overset{_\circ}{\mathscr{B}}\,^+(G)$$\big)$; and
      moreover, we have that
      \begin{itemize}
        \item[(a)] $\alpha^g_h\mathscr{R}\alpha^k_l$ in
            $\overline{\mathscr{B}}(G)$ $\big($resp., in
            $\overline{\mathscr{B}}\,^+(G)$$\big)$ if and only if $g=k$;
            \; and
        \item[(b)] $\overset{_\circ}{\alpha}{}^g_h\mathscr{R}
            \overset{_\circ}{\alpha}{}^k_l$ in
            $\overline{\mathscr{B}}(G)$ $\big($resp., in
            $\overline{\mathscr{B}}\,^+(G)$$\big)$ if and only if $g=k$.
      \end{itemize}

  \item[$(v)$] The elements $\alpha$ and $\beta$ of the semigroup
      $\overline{\mathscr{B}}(G)$ $\big($resp.,
      $\overline{\mathscr{B}}\,^+(G)$$\big)$ are
      $\mathscr{L}$-equivalent in
      $\overline{\mathscr{B}}(G)$ $\big($resp., in
      $\overline{\mathscr{B}}\,^+(G)$$\big)$ provides either
      $\alpha,\beta\in\mathscr{B}(G)$ $\big($resp.,
      $\alpha,\beta\in\overline{\mathscr{B}}\,^+(G)$$\big)$ or
      $\alpha,\beta\in\overset{_\circ}{\mathscr{B}}(G)$ $\big($resp.,
      $\alpha,\beta\in\overset{_\circ}{\mathscr{B}}\,^+(G)$$\big)$;
      and moreover,  we have that
      \begin{itemize}
        \item[(a)] $\alpha^g_h\mathscr{L}\alpha^k_l$ in
            $\overline{\mathscr{B}}(G)$ $\big($resp., in
      $\overline{\mathscr{B}}\,^+(G)$$\big)$ if and only if $h=l$; \; and
        \item[(b)] $\overset{_\circ}{\alpha}{}^g_h\mathscr{L}
            \overset{_\circ}{\alpha}{}^k_l$ in
            $\overline{\mathscr{B}}(G)$ $\big($resp., in
      $\overline{\mathscr{B}}\,^+(G)$$\big)$ if and only if $h=l$.
      \end{itemize}

  \item[$(vi)$] The elements $\alpha$ and $\beta$ of the semigroup
      $\overline{\mathscr{B}}(G)$ $\big($resp.,
      $\overline{\mathscr{B}}\,^+(G)$$\big)$ are
      $\mathscr{H}$-equivalent in
      $\overline{\mathscr{B}}(G)$ $\big($resp., in
      $\overline{\mathscr{B}}\,^+(G)$$\big)$ provides either
      $\alpha,\beta\in\mathscr{B}(G)$ $\big($resp.,
      $\alpha,\beta\in\overline{\mathscr{B}}\,^+(G)$$\big)$ or
      $\alpha,\beta\in\overset{_\circ}{\mathscr{B}}(G)$ $\big($resp.,
      $\alpha,\beta\in\overset{_\circ}{\mathscr{B}}\,^+(G)$$\big)$;
      and moreover,  we have that
      \begin{itemize}
        \item[(a)] $\alpha^g_h\mathscr{H}\alpha^k_l$ in
            $\overline{\mathscr{B}}(G)$ $\big($resp., in
      $\overline{\mathscr{B}}\,^+(G)$$\big)$ if and only if $g=k$ and
            $h=l$;
        \item[(b)] $\overset{_\circ}{\alpha}{}^g_h\mathscr{H}
            \overset{_\circ}{\alpha}{}^k_l$ in
            $\overline{\mathscr{B}}(G)$ $\big($resp., in
      $\overline{\mathscr{B}}\,^+(G)$$\big)$ if and only if $g=k$ and
            $h=l$; \; and
        \item[(c)] every $\mathscr{H}$-class in
        $\overline{\mathscr{B}}(G)$
        $\big($resp., in $\overline{\mathscr{B}}\,^+(G)$$\big)$
        is a singleton  set.
      \end{itemize}

  \item[$(vii)$] $\overline{\mathscr{B}}(G)$ $\big($resp.,
    $\overline{\mathscr{B}}\,^+(G)$$\big)$ is a simple semigroup.

  \item[$(viii)$] The semigroup $\overline{\mathscr{B}}(G)$
   $\big($resp.,
   $\overline{\mathscr{B}}\,^+(G)$$\big)$ has only two distinct
   $\mathscr{D}$-classes which are inverse subsemigroups
   $\mathscr{B}(G)$ and $\overset{_\circ}{\mathscr{B}}(G)$
   $\big($resp., $\mathscr{B}^+(G)$ and
       $\overset{_\circ}{\mathscr{B}}\,^+(G)$$\big)$.
\end{itemize}
\end{proposition}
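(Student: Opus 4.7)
The plan is to exploit the fact that $\overline{\mathscr{B}}(G)$ is the disjoint union of $\mathscr{B}(G)$ and $\overset{_\circ}{\mathscr{B}}(G)$, each of which is already understood via Proposition~\ref{proposition-2.1}, and then to glue the two pieces together using the mixed multiplication formulas from the proof of Proposition~\ref{proposition-1.5}. I will treat the statements in the order $(i)$--$(viii)$ since each relies on the previous.

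Statement $(i)$ is immediate: an element of the disjoint union is an idempotent iff it is an idempotent in the component that contains it, and the two sets of idempotents were described in Proposition~\ref{proposition-1.1}$(ii)$ and its analogue for $\overset{_\circ}{\mathscr{B}}(G)$. For $(ii)$, I compute the four products $\alpha^g_g\cdot\alpha^h_h$, $\overset{_\circ}{\alpha}{}^g_g\cdot\overset{_\circ}{\alpha}{}^h_h$, $\alpha^g_g\cdot\overset{_\circ}{\alpha}{}^h_h$, and $\overset{_\circ}{\alpha}{}^g_g\cdot\alpha^h_h$ using formulas~(\ref{formula-1.1}), (\ref{formula-1.2}) and the mixed rules displayed in the proof of Proposition~\ref{proposition-1.5}; in each case the result is an idempotent of the appropriate type whose upper index is $g\vee h$, so the condition $ef=fe=e$ reduces to a comparison of $g$ with $h$. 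The only subtle case is $(c)$: if $g=h$ then $\alpha^g_g\cdot\overset{_\circ}{\alpha}{}^g_g=\overset{_\circ}{\alpha}{}^g_g\neq\alpha^g_g$, so strict inequality $g>h$ is required. Statement $(iii)$ then follows by observing that the map $\mathfrak{i}$ sends the natural partial order described in $(ii)$ exactly onto the lexicographic order on $G\times_{\operatorname{\textbf{lex}}}\{0,1\}$, with the second coordinate $1$ (resp.\ $0$) marking the closed (resp.\ open) cone.

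For $(iv)$, $(v)$, $(vi)$ I use that $\overline{\mathscr{B}}(G)$ is inverse (Proposition~\ref{proposition-1.5}) together with the standard fact (Theorem~1.17 of \cite{CliffordPreston1961-1967}) that $a\mathscr{R}b$ iff $aa^{-1}=bb^{-1}$ and $a\mathscr{L}b$ iff $a^{-1}a=b^{-1}b$. The key observation is that inversion preserves each of the two pieces: $(\alpha^g_h)^{-1}=\alpha^h_g\in\mathscr{B}(G)$ and $(\overset{_\circ}{\alpha}{}^g_h)^{-1}=\overset{_\circ}{\alpha}{}^h_g\in\overset{_\circ}{\mathscr{B}}(G)$, so $\alpha\alpha^{-1}$ lies in $E(\mathscr{B}(G))$ for $\alpha\in\mathscr{B}(G)$ and in $E(\overset{_\circ}{\mathscr{B}}(G))$ for $\alpha\in\overset{_\circ}{\mathscr{B}}(G)$. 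Since these two sets of idempotents are disjoint, no element of $\mathscr{B}(G)$ is $\mathscr{R}$- or $\mathscr{L}$-related to any element of $\overset{_\circ}{\mathscr{B}}(G)$, which gives the first sentence of each part; the explicit criteria (a), (b) then follow exactly as in Proposition~\ref{proposition-2.1}$(iii)$--$(v)$, applied inside each component separately. Statement $(vi)$ is a formal consequence of $(iv)$ and $(v)$.

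For $(viii)$, $\mathscr{D}=\mathscr{L}\circ\mathscr{R}$ combined with $(iv)$ and $(v)$ forces every $\mathscr{D}$-class to lie inside $\mathscr{B}(G)$ or inside $\overset{_\circ}{\mathscr{B}}(G)$; conversely, each of these two subsemigroups is bisimple by Proposition~\ref{proposition-2.1}$(vi)$, so each constitutes a single $\mathscr{D}$-class. Finally, for simplicity $(vii)$, I must show that every two-sided ideal of $\overline{\mathscr{B}}(G)$ is the whole semigroup, and here lies the main obstacle, since $\overline{\mathscr{B}}(G)$ is \emph{not} bisimple. The idea is to use the mixed products: given any $\alpha^g_h\in\mathscr{B}(G)$, choose $k,l\in G$ with $h<k$ and read off from the proof of Proposition~\ref{proposition-1.5} that $\alpha^g_h\cdot\overset{_\circ}{\alpha}{}^k_l=\overset{_\circ}{\alpha}{}^{k\cdot h^{-1}\cdot g}_l\in\overset{_\circ}{\mathscr{B}}(G)$, and symmetrically $\overset{_\circ}{\alpha}{}^g_h\cdot\alpha^k_l\in\mathscr{B}(G)$ when $h>k$ (which exists because $G$ is a $d$-group). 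Combining this with the simplicity of each component (Proposition~\ref{proposition-2.1}$(vii)$), a principal two-sided ideal generated by any element hits both $\mathscr{B}(G)$ and $\overset{_\circ}{\mathscr{B}}(G)$, and therefore equals all of $\overline{\mathscr{B}}(G)$. The argument for $\overline{\mathscr{B}}\,^+(G)$ is identical modulo replacing $G$ by $G^+$ and verifying that the auxiliary elements $k,l$ can be taken in $G^+$, which uses the $d$-group hypothesis on the positive cone.
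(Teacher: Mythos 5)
Your proposal is correct in substance and, for parts $(i)$--$(vi)$ and $(viii)$, follows essentially the same route as the paper: the idempotents and the natural partial order are read off from the mixed multiplication formulas displayed in the proof of Proposition~\ref{proposition-1.5}, and the Green's relations are handled via Theorem~1.17 of Clifford--Preston exactly as in Proposition~\ref{proposition-2.1}, with the disjointness of $E(\mathscr{B}(G))$ and $E\big(\overset{_\circ}{\mathscr{B}}(G)\big)$ forcing each $\mathscr{R}$-, $\mathscr{L}$- and hence $\mathscr{D}$-class into a single component. Where you genuinely diverge is $(vii)$: the paper proves simplicity directly, exhibiting in each of the four cases ($\alpha$ and $\beta$ distributed over the two components) explicit elements $\gamma,\delta$ with $\gamma\cdot\alpha\cdot\delta=\beta$, the only non-obvious case being $\alpha\in\overset{_\circ}{\mathscr{B}}(G)$ and $\beta\in\mathscr{B}(G)$, where it passes through an auxiliary element $a>g$. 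You instead argue ideal-theoretically: every principal two-sided ideal meets both components, and the simplicity of each component (Proposition~\ref{proposition-2.1}$(vii)$) then absorbs all of $\mathscr{B}(G)$ and all of $\overset{_\circ}{\mathscr{B}}(G)$ into that ideal. This is sound and arguably cleaner, since it reuses the already-established simplicity of the pieces rather than redoing the computation.

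One slip to fix in that step: you claim $\overset{_\circ}{\alpha}{}^g_h\cdot\alpha^k_l\in\mathscr{B}(G)$ when $h>k$. By the formula in the proof of Proposition~\ref{proposition-1.5}, for $h>k$ this product equals $\overset{_\circ}{\alpha}{}^g_{h\cdot k^{-1}\cdot l}$, which stays in $\overset{_\circ}{\mathscr{B}}(G)$; it is the case $h<k$ that yields $\alpha^{k\cdot h^{-1}\cdot g}_l\in\mathscr{B}(G)$. The argument survives with the inequality reversed (and in the positive-cone version such $k$ and $l$ can indeed be chosen in $G^+$, e.g.\ $k=h\cdot x$ with $e<x$), but as written you are invoking the wrong branch of the case analysis precisely at the step that puts an element of $\mathscr{B}(G)$ into the ideal generated by an element of $\overset{_\circ}{\mathscr{B}}(G)$. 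Note also that what this existence really requires is that $G$ be non-trivial, not the $d$-group property as such.
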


\begin{proof}
Statements $(i)$, $(ii)$ and $(iii)$ follow from the definition of
the semigroup $\overline{\mathscr{B}}(G)$ and
Proposition~\ref{proposition-1.5}.

Proofs of statements $(iv)$, $(v)$ and $(vi)$ follow from
Proposition~\ref{proposition-1.5} and
Theorem~1.17~\cite{CliffordPreston1961-1967} and are similar to
statements $(ii)$, $(iv)$ and $(v)$ of
Proposition~\ref{proposition-2.1}.

$(vii)$ We shall show that
$\overline{\mathscr{B}}(G)\cdot\alpha\cdot
\overline{\mathscr{B}}(G)=\overline{\mathscr{B}}(G)$ for every
$\alpha\in\overline{\mathscr{B}}(G)$. We fix arbitrary
$\alpha,\beta\in\overline{\mathscr{B}}(G)$ and show that there exist
$\gamma,\delta\in\overline{\mathscr{B}}(G)$ such that
$\gamma\cdot\alpha\cdot\delta=\beta$.

We consider the following cases:
\begin{itemize}
  \item[$(1)$] $\alpha=\alpha^g_h\in\mathscr{B}(G)$ and
       $\beta=\alpha^k_l\in\mathscr{B}(G)$;

  \item[$(2)$] $\alpha=\alpha^g_h\in\mathscr{B}(G)$ and
       $\beta=\overset{_\circ}{\alpha}{}^k_l\in
       \overset{_\circ}{\mathscr{B}}(G)$;

  \item[$(3)$] $\alpha=\overset{_\circ}{\alpha}{}^g_h\in
       \overset{_\circ}{\mathscr{B}}(G)$ and
       $\beta=\alpha^k_l\in\mathscr{B}(G)$;

  \item[$(4)$] $\alpha=\overset{_\circ}{\alpha}{}^g_h\in
       \overset{_\circ}{\mathscr{B}}(G)$ and
       $\beta=\overset{_\circ}{\alpha}{}^k_l\in
       \overset{_\circ}{\mathscr{B}}(G)$,
\end{itemize}
where $g,h,k,l\in G$.

We put:
\begin{itemize}
  \item[] $\gamma=\alpha^k_g$ and $\delta=\alpha^h_l$ in case $(1)$;

  \item[] $\gamma=\overset{_\circ}{\alpha}{}^k_g$ and
       $\delta=\overset{_\circ}{\alpha}{}^h_l$ in case $(2)$;

  \item[] $\gamma=\alpha^k_a$ and $\delta=\alpha^{a\cdot
       g^{-1}\cdot h}_l$, where $a\in G^+(g)\setminus\{g\}$, in case
       $(3)$;

  \item[] $\gamma=\overset{_\circ}{\alpha}{}^k_g$ and
       $\delta=\overset{_\circ}{\alpha}{}^h_l$ in case $(4)$.
\end{itemize}
Elementary verifications show that
$\gamma\cdot\alpha\cdot\delta=\beta$, and this completes the proof
of assertion $(vii)$.

Statement $(viii)$ follows from statements $(iv)$ and $(v)$.

The proof of the statements of the proposition for the semigroup
$\overline{\mathscr{B}}\,^+(G)$ is similar.
\end{proof}

\begin{proposition}\label{proposition-2.3} Let $G$ be a linearly
ordered group. Then for any distinct elements $g$ and $h$ in $G$
such that $g\leqslant h$ in $G$ $($resp., in $G^+)$ the subsemigroup
$\mathscr{C}\left(\overline{g,h}\right)$ of $\mathscr{B}(G)$
$($resp., $\mathscr{B}^+(G))$, which is generated by elements
$\alpha^g_h$ and $\alpha^h_g$, is isomorphic to the bicyclic
semigroup, and hence for every idempotent $\alpha^g_g$ in
$\mathscr{B}(G)$ $($resp., in $\mathscr{B}^+(G))$ there exists a
subsemigroup $\mathscr{C}$ in $\mathscr{B}(G)$ $($resp., in
$\mathscr{B}^+(G))$ such that $\alpha^g_g$ is a unit of
$\mathscr{C}$ and $\mathscr{C}$ is isomorphic to the bicyclic
semigroup.
\end{proposition}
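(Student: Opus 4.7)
The plan is to use the product formula (\ref{formula-1.1}) throughout and reduce $\mathscr{C}(\overline{g,h})$ to a concrete parametrized family that matches the bicyclic array visibly. Set $p := \alpha^g_h$, $q := \alpha^h_g$, and $c := g^{-1}\cdot h$. Since $g \leqslant h$ and $g \neq h$, translation-invariance of the linear order on $G$ gives $c > e$, so the sequence $u_n := g \cdot c^n$ is strictly increasing in $G$ for $n = 0, 1, 2, \ldots$ (and lies entirely in $G^+$ when $g, h \in G^+$, since $c \in G^+$ and $G^+$ is closed under multiplication). In particular $u_0 = g$ and $u_1 = h$.

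First I would compute $p \cdot q$ from (\ref{formula-1.1}) to get $p \cdot q = \alpha^g_g$, which shows that the idempotent $\alpha^g_g$ already lies in the subsemigroup and will turn out to be its unit. Then by induction on $n \geqslant 1$, using the recursion $u_{n+1} = u_n \cdot c$ and one application of (\ref{formula-1.1}) per step, I would prove
\begin{equation*}
p^n = \alpha^g_{u_n} \qquad \text{and} \qquad q^n = \alpha^{u_n}_g.
\end{equation*}
A further use of (\ref{formula-1.1}) yields $q^k \cdot p^l = \alpha^{u_k}_{u_l}$ for all $k, l \geqslant 0$ (with the convention $p^0 = q^0 = \alpha^g_g$). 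In particular the subsemigroup generated by $p$ and $q$ is exactly $\{\alpha^{u_k}_{u_l} : k, l \geqslant 0\}$.

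To identify this set with the bicyclic semigroup, define $\Psi \colon \mathscr{C}(p,q) \to \mathscr{C}(\overline{g,h})$ by $\Psi(q^k p^l) = \alpha^{u_k}_{u_l}$. Injectivity follows from the strict monotonicity of $(u_n)$; surjectivity is already shown. The remaining work is multiplicativity: one computes $\alpha^{u_k}_{u_l} \cdot \alpha^{u_m}_{u_n}$ from (\ref{formula-1.1}) and checks it equals $\alpha^{u_{k+m-\min\{l,m\}}}_{u_{l+n-\min\{l,m\}}}$, which matches the bicyclic rule. The single key simplification is the identity
\begin{equation*}
u_j \cdot u_i^{-1} = g \cdot c^{j-i} \cdot g^{-1} \qquad (j \geqslant i),
\end{equation*}
which collapses the three cases $l<m$, $l=m$, $l>m$ into the bicyclic formula uniformly.

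The second assertion is then immediate from the first: for any idempotent $\alpha^g_g \in \mathscr{B}(G)$ (resp.\ $\mathscr{B}^+(G)$), choose any $h \in G$ (resp.\ $h \in G^+$) with $h > g$ --- such $h$ exists unless the ambient group (resp.\ positive cone) is trivial, in which case the statement is vacuous --- and apply the first part; the equality $\alpha^g_g = p \cdot q$ shows $\alpha^g_g$ is the unit of $\mathscr{C}(\overline{g,h})$. The main technical obstacle is managing the non-commutativity of $G$ in the verification of the bicyclic product rule, and the device that overcomes it is to parametrize everything through powers of the single element $c = g^{-1} h$, so that the monotonicity of $n \mapsto c^n$ and the conjugation-type identity above reduce all arguments to manipulations among mutually commuting powers.
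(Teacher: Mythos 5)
Your proof is correct and takes essentially the same route as the paper's: both identify $\mathscr{C}\left(\overline{g,h}\right)$ with the bicyclic semigroup by matching the generators $\alpha^g_h$, $\alpha^h_g$ with the standard ones, the paper simply asserting (via Remark~\ref{remark-1.0}) that this correspondence extends to an isomorphism while you supply the explicit normal forms $q^kp^l=\alpha^{u_k}_{u_l}$ and verify the multiplication rule, which is exactly the computation the paper leaves to the reader. (A minor quibble: for the degenerate case $G=\{e\}$ the second assertion is false rather than vacuous, but the paper implicitly excludes this case as well.)
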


\begin{proof}
Since the semigroup $\mathscr{C}$ which is generated by elements
$\alpha^g_h$ and $\alpha^h_g$ is isomorphic to the semigroup
$\mathscr{C}_{\mathbb{N}}(\alpha,\beta)$ (this isomorphism
$\mathfrak{i}\colon \mathscr{C}\rightarrow
\mathscr{C}_{\mathbb{N}}(\alpha,\beta)$ we can determine on
generating elements of $\mathscr{C}$ by the formulae
$(\alpha^g_h)\mathfrak{i}=\alpha$ and
$(\alpha^h_g)\mathfrak{i}=\beta$) we conclude that the first part of
the proposition follows from Remark~\ref{remark-1.0}. Obviously, the
element $\alpha^g_g$ is a unity of the semigroup $\mathscr{C}$.
\end{proof}

\section{Congruences on the semigroups $\mathscr{B}(G)$ and
$\mathscr{B}^+(G)$}

The following lemma follows from the definition of a congruence on a
semilattice:

\begin{lemma}\label{lemma-2.4}
Let $\mathfrak{C}$ be an arbitrary congruence on a semilattice $S$
and let $\preccurlyeq$ be the natural partial order on $S$. Let $a$
and $b$ be idempotents of the semigroup $S$ such that
$a\mathfrak{C}b$. Then the relation $a\preccurlyeq b$ implies that
$a\mathfrak{C}c$ for all idempotents $c\in S$ such that
$a\preccurlyeq c\preccurlyeq b$.
\end{lemma}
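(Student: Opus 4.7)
The plan is to exploit the fact that in a semilattice every element is idempotent and that the natural partial order $\preccurlyeq$ is characterized by $x \preccurlyeq y \iff xy = x$. Given the hypothesis $a \preccurlyeq c \preccurlyeq b$, I would immediately record the two identities $ac = a$ (from $a \preccurlyeq c$) and $cb = c$ (from $c \preccurlyeq b$).

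Next, I would use that $\mathfrak{C}$ is a congruence: from $a\mathfrak{C}b$ and the trivial $c\mathfrak{C}c$, multiplying on the right yields $ac\,\mathfrak{C}\,bc$. Substituting the two identities above (together with the commutativity of the semilattice operation to rewrite $bc = cb = c$), this becomes exactly $a\mathfrak{C}c$, which is the conclusion.

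There is essentially no obstacle here: the whole argument is a one-line computation using compatibility of the congruence with multiplication plus the definition of $\preccurlyeq$. The only thing to be careful about is invoking commutativity in the semilattice when rewriting $bc$ as $cb$ so that the defining relation $cb = c$ of $c \preccurlyeq b$ applies directly; after that, the proof is complete.
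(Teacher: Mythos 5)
Your argument is correct: from $a\preccurlyeq c$ and $c\preccurlyeq b$ you get $ac=a$ and $bc=c$, and multiplying the relation $a\mathfrak{C}b$ by $c$ gives $ac\,\mathfrak{C}\,bc$, i.e.\ $a\mathfrak{C}c$. The paper states the lemma without proof, remarking only that it ``follows from the definition of a congruence on a semilattice,'' and your one-line computation is exactly that intended argument.
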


A linearly ordered group $G$ is called \emph{archimedean} if for
each $a, b \in G^+\setminus\{e\}$ there exist positive integers $m$
and $n$ such that $b\leqslant a^m$ and $a\leqslant
b^n$~\cite{Birkhoff1973}. Linearly ordered archimedean groups may be
described as follows (\textbf{H\"{o}lder's Theorem}): \emph{A
linearly ordered group is Archimedean if and only if it is
isomorphic to some subgroup of the additive group of real numbers
with the natural order}~\cite{Holder1901}.

\begin{theorem}\label{theorem-2.5} Let $G$ be an archimedean linearly
ordered group. Then every non-trivial congruence on
$\mathscr{B}^+(G)$ is a group congruence.
\end{theorem}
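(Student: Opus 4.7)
The plan is to prove that any non-trivial congruence $\mathfrak{C}$ on $\mathscr{B}^+(G)$ collapses the entire idempotent semilattice $E(\mathscr{B}^+(G))$ into a single $\mathfrak{C}$-class; this suffices, because a congruence on an inverse semigroup is a group congruence iff each of its classes contains a unique idempotent. I would carry this out in three stages: (a) produce a pair of distinct $\mathfrak{C}$-related idempotents; (b) propagate this into a uniform dilation relation $\alpha^k_k\,\mathfrak{C}\,\alpha^{ck}_{ck}$ valid for every $k\in G^+$ with a fixed $c>e$; and (c) close the argument using the archimedean hypothesis together with Lemma~\ref{lemma-2.4}.

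For (a), pick distinct $a=\alpha^g_h$ and $b=\alpha^k_l$ with $a\,\mathfrak{C}\,b$ and apply the congruence to $aa^{-1}$ vs.\ $bb^{-1}$ and to $a^{-1}a$ vs.\ $b^{-1}b$, obtaining $\alpha^g_g\,\mathfrak{C}\,\alpha^k_k$ and $\alpha^h_h\,\mathfrak{C}\,\alpha^l_l$. Since $\mathscr{H}$-classes in $\mathscr{B}^+(G)$ are singletons by Proposition~\ref{proposition-2.1}(v), the inequality $a\neq b$ forces $g\neq k$ or $h\neq l$, so some distinct idempotents $\alpha^x_x$ and $\alpha^y_y$ with $x<y$ in $G^+$ satisfy $\alpha^x_x\,\mathfrak{C}\,\alpha^y_y$.

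For (b), set $c:=yx^{-1}>e$ and right-multiply both sides by $\alpha^x_k$ for an arbitrary $k\in G^+$; formula~(\ref{formula-1.1}) yields $\alpha^x_x\cdot\alpha^x_k=\alpha^x_k$ and $\alpha^y_y\cdot\alpha^x_k=\alpha^y_{ck}$, so $\alpha^x_k\,\mathfrak{C}\,\alpha^y_{ck}$, and passing to the corresponding right (domain) idempotents gives $\alpha^k_k\,\mathfrak{C}\,\alpha^{ck}_{ck}$. Iterating from $k=e$ produces $\alpha^e_e\,\mathfrak{C}\,\alpha^{c^m}_{c^m}$ for every $m\in\mathbb{N}$. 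For (c), given any $k\in G^+$, the archimedean hypothesis (equivalently, H\"{o}lder's theorem) furnishes an $m$ with $k\leqslant c^m$, so $e\leqslant k\leqslant c^m$ and Proposition~\ref{proposition-2.1}(i) yields $\alpha^{c^m}_{c^m}\preccurlyeq\alpha^k_k\preccurlyeq\alpha^e_e$ in $E(\mathscr{B}^+(G))$. Lemma~\ref{lemma-2.4} applied to the restriction of $\mathfrak{C}$ to this linearly ordered semilattice then forces $\alpha^k_k\,\mathfrak{C}\,\alpha^e_e$, so every idempotent is $\mathfrak{C}$-related to $\alpha^e_e$, as required.

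The main obstacle is step (b): transforming the identification of one specific pair of idempotents into the uniform dilation $\alpha^k_k\,\mathfrak{C}\,\alpha^{ck}_{ck}$ holding for \emph{every} $k\in G^+$. The decisive computational idea is choosing the multiplier $\alpha^x_k$, whose role is to transport the domain $x$ to the prescribed $k$, after which formula~(\ref{formula-1.1}) reproduces the same dilation factor $c=yx^{-1}$ on the other side. Once this uniform dilation is available, the archimedean hypothesis is precisely the statement needed to dominate any $k$ by a power $c^m$, and Lemma~\ref{lemma-2.4} closes the argument routinely.
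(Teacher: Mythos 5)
Your proof is correct. It shares the paper's overall skeleton (extract two distinct $\mathfrak{C}$-related idempotents, propagate the identification along a cofinal chain, and finish with the archimedean hypothesis plus Lemma~\ref{lemma-2.4}), but your propagation step (b) takes a genuinely different and cleaner route. The paper conjugates the pair $\alpha^a_a\,\mathfrak{C}\,\alpha^c_c$ by $\alpha^c_a$ and $\alpha^a_c$ to obtain $\alpha^a_a\,\mathfrak{C}\,\alpha^{c\cdot(a^{-1}c)^n}_{c\cdot(a^{-1}c)^n}$, which only reaches the idempotents $\alpha^g_g$ with $g\geqslant a$; it must then split into the cases $a=e$ and $e<a$, and in the latter case embed a bicyclic subsemigroup on the idempotents $\alpha^{g^i}_{g^i}$ and invoke Corollary~1.32 of Clifford--Preston to drag the identification down to $\alpha^e_e$. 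Your right multiplication by $\alpha^x_m$ instead transports the identified pair to an arbitrary base point, yielding the uniform dilation $\alpha^m_m\,\mathfrak{C}\,\alpha^{cm}_{cm}$ for every $m\in G^+$ at once; taking $m=e$ and iterating gives a cofinal chain anchored at $\alpha^e_e$ directly, so both the case split and the appeal to the bicyclic semigroup disappear. Your computations check against formula~(\ref{formula-1.1}): with $x<y$ one gets $\alpha^y_y\cdot\alpha^x_m=\alpha^y_{cm}$ with $c=yx^{-1}>e$, and passing from $\alpha^x_m\,\mathfrak{C}\,\alpha^y_{cm}$ to the right idempotents is legitimate because congruences on inverse semigroups commute with inversion (Lemma~III.1.1 of Petrich, the same lemma the paper uses). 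One phrasing slip: a congruence on an inverse semigroup is a group congruence iff \emph{all} idempotents lie in a single class (equivalently, the quotient has a unique idempotent), not iff ``each of its classes contains a unique idempotent''; what your argument actually establishes and uses is the former, exactly as the paper does via Lemma~II.1.10 of Petrich, so nothing in the proof is affected.
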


\begin{proof}
Suppose that $\mathfrak{C}$ is a non-trivial congruence on the
semigroup $\mathscr{B}^+(G)$. Then there exist distinct elements
$\alpha^a_b$ and $\alpha^c_d$ of the semigroup $\mathscr{B}^+(G)$
such that $\alpha^a_b\mathfrak{C}\alpha^c_d$. Since by
Proposition~\ref{proposition-2.1}$(v)$ every $\mathscr{H}$-class of
the semigroup $\mathscr{B}^+(G)$ is a singleton set we conclude that
either $\alpha^a_b\cdot(\alpha^a_b)^{-1}\neq
\alpha^c_d\cdot(\alpha^c_d)^{-1}$ or
$(\alpha^a_b)^{-1}\cdot\alpha^a_b\neq
(\alpha^c_d)^{-1}\cdot\alpha^c_d$. We shall consider case
$\alpha^a_a=\alpha^a_b\cdot(\alpha^a_b)^{-1}\neq
\alpha^c_d\cdot(\alpha^c_d)^{-1}=\alpha^c_c$. In the other case the
proof is similar. Since by Proposition~\ref{proposition-2.1}$(ii)$
the band $E(\mathscr{B}^+(G))$ is a linearly ordered semilattice
without loss of generality we can assume that
$\alpha^c_c\preccurlyeq\alpha^a_a$. Then by
Proposition~\ref{proposition-2.1}$(i)$ we have that $a\leqslant c$
in $G$. Since $\alpha^a_b\mathfrak{C}\alpha^c_d$ and
$\mathscr{B}^+(G)$ is an inverse semigroup Lemma~III.1.1 from
\cite{Petrich1984} implies that
$\left(\alpha^a_b\cdot(\alpha^a_b)^{-1}\right)\mathfrak{C}
\left(\alpha^c_d\cdot(\alpha^c_d)^{-1}\right)$, i.e.,
$\alpha^a_a\mathfrak{C}\alpha^c_c$. Then we have that
\begin{eqnarray*}
  &&\alpha^c_a\cdot\alpha^a_a\cdot\alpha^a_c=\alpha^c_c; \\
  &&\alpha^c_a\cdot\alpha^c_c\cdot\alpha^a_c=\alpha^{c\cdot
  a^{-1}\cdot c}_{c\cdot a^{-1}\cdot c}; \\
  &&\alpha^c_a\cdot\alpha^{c\cdot a^{-1}\cdot c}_{c\cdot a^{-1}\cdot c}
  \cdot\alpha^a_c=
  \alpha^{c\cdot(a^{-1}\cdot c)^2}_{c\cdot(a^{-1}\cdot c)^2}; \\
  && \ldots \quad \ldots \\
  &&\alpha^c_a\cdot
  \alpha^{c\cdot(a^{-1}\cdot c)^{n-1}}_{c\cdot(a^{-1}\cdot c)^{n-1}}
  \cdot\alpha^a_c=
  \alpha^{c\cdot(a^{-1}\cdot c)^n}_{c\cdot(a^{-1}\cdot c)^n}.
\end{eqnarray*}
and hence $\alpha^a_a\mathfrak{C}\alpha^{c\cdot(a^{-1}\cdot
c)^n}_{c\cdot(a^{-1}\cdot c)^n}$ for every non-negative integer $n$.
Since $a<c$ in $G$ we get that $a^{-1}\cdot c$ is a positive element
of the linearly ordered group $G$. Since the linearly ordered group
$G$ is archimedean we conclude that for every $g\in G$ with $g>a$
there exists a positive integer $n$ such that $a^{-1}\cdot
g<\left(a^{-1}\cdot c\right)^{n}$ and hence $g<c\cdot
\left(a^{-1}\cdot c\right)^{n-1}$. Therefore Lemma~\ref{lemma-2.4}
and Proposition~\ref{proposition-2.1}$(i)$ imply that
$\alpha^a_a\mathfrak{C}\alpha^g_g$ for every $g\in G$ such that
$a\leqslant g$.

If $a=e$ then we have that all idempotents of the semigroup
$\mathscr{B}^+(G)$ are $\mathfrak{C}$-equivalent. Since the
semigroup $\mathscr{B}^+(G)$ is inverse we conclude that the
quotient semigroup $\mathscr{B}^+(G)/\mathfrak{C}$ contains only one
idempotent and by Lemma II.1.10 from \cite{Petrich1984} the
semigroup $\mathscr{B}^+(G)/\mathfrak{C}$ is a group.

Suppose that $e<a$. Then by Proposition~\ref{proposition-2.3} we
have that the semigroup $\mathscr{C}^*$ which is generated by
elements $\alpha^e_g$ and $\alpha^g_e$ is isomorphic to the bicyclic
semigroup for every element $g$ in $G^+$ such that $e<a\leqslant g$.
Hence we have that the following conditions hold :
\begin{equation*}
    \ldots\preccurlyeq \alpha^{g^i}_{g^i}
    \preccurlyeq \alpha^{g^{i-1}}_{g^{i-1}}\preccurlyeq
    \ldots\preccurlyeq\alpha^{g}_{g}\preccurlyeq\alpha^{a}_{a}
    \qquad \hbox{ and } \qquad
\end{equation*}
\begin{equation*}
     \alpha^{g^i}_{g^i}\neq
    \alpha^{g^{j}}_{g^{j}} \quad \hbox{ for distinct positive integers }
    i \hbox{ and } j,
\end{equation*}
in $E(\mathscr{B}^+(G))$. Since the linearly ordered group $G$ is
archimedean we conclude that
$\alpha^a_a\mathfrak{C}\alpha^{g^i}_{g^i}$ for every positive
integer $i$. Since the semigroup $\mathscr{C}^*$  is isomorphic to
the bicyclic semigroup we have that Corollary 1.32 of
\cite{CliffordPreston1961-1967} and Lemma~\ref{lemma-2.4} imply that
all idempotents of the semigroup $\mathscr{B}^+(G)$ are
$\mathfrak{C}$-equivalent. Since the semigroup $\mathscr{B}^+(G)$ is
inverse we conclude that the quotient semigroup
$\mathscr{B}^+(G)/\mathfrak{C}$ contains only one idempotent and by
Lemma II.1.10 from \cite{Petrich1984} the semigroup
$\mathscr{B}^+(G)/\mathfrak{C}$ is a group.
\end{proof}

\begin{theorem}\label{theorem-2.6} Let $G$ be an archimedean
linearly ordered group. Then every non-trivial congruence on
$\mathscr{B}(G)$ is a group congruence.
\end{theorem}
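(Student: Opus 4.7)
The plan is to reduce the statement to Theorem~\ref{theorem-2.5} by restricting the congruence to $\mathscr{B}^+(G)$ and then extending back up to $\mathscr{B}(G)$. Suppose $\mathfrak{C}$ is a non-trivial congruence on $\mathscr{B}(G)$. Arguing exactly as at the start of the proof of Theorem~\ref{theorem-2.5}, using the fact that every $\mathscr{H}$-class of $\mathscr{B}(G)$ is a singleton (Proposition~\ref{proposition-2.1}(v)) together with the inverse-semigroup identity $\alpha\,\mathfrak{C}\,\beta\Rightarrow\alpha\alpha^{-1}\,\mathfrak{C}\,\beta\beta^{-1}$ and the linearity of $E(\mathscr{B}(G))$ (Proposition~\ref{proposition-2.1}(ii)), one extracts distinct idempotents $\alpha^a_a\,\mathfrak{C}\,\alpha^c_c$ with $a<c$ in $G$.

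The first step is to translate this pair into $\mathscr{B}^+(G)$ by conjugation. A direct application of formula~(\ref{formula-1.1}) yields
\begin{equation*}
\alpha^e_a\cdot\alpha^a_a\cdot\alpha^a_e=\alpha^e_e
\qquad\hbox{and}\qquad
\alpha^e_a\cdot\alpha^c_c\cdot\alpha^a_e=\alpha^{c a^{-1}}_{c a^{-1}},
\end{equation*}
with $c a^{-1}>e$ by translation invariance of the order. Hence $\alpha^e_e\,\mathfrak{C}\,\alpha^{c a^{-1}}_{c a^{-1}}$, and these are distinct elements of $\mathscr{B}^+(G)$, so the restriction of $\mathfrak{C}$ to $\mathscr{B}^+(G)$ is a non-trivial congruence on that subsemigroup. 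By Theorem~\ref{theorem-2.5} this restriction is a group congruence, whence by Lemma~II.1.10 of~\cite{Petrich1984} all idempotents of $\mathscr{B}^+(G)$ are $\mathfrak{C}$-equivalent.

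The remaining task---the one that goes beyond Theorem~\ref{theorem-2.5}---is to extend this equivalence to the idempotents $\alpha^g_g$ with $g<e$. For every $g_1\in G$ and every $g\in G^+$, formula~(\ref{formula-1.1}) yields
\begin{equation*}
\alpha^{g_1}_e\cdot\alpha^e_e\cdot\alpha^e_{g_1}=\alpha^{g_1}_{g_1}
\qquad\hbox{and}\qquad
\alpha^{g_1}_e\cdot\alpha^g_g\cdot\alpha^e_{g_1}=\alpha^{g g_1}_{g g_1}.
\end{equation*}
Combined with $\alpha^e_e\,\mathfrak{C}\,\alpha^g_g$ for every $g\in G^+$, this produces $\alpha^{g_1}_{g_1}\,\mathfrak{C}\,\alpha^{g g_1}_{g g_1}$ for every $g_1\in G$ and every $g\in G^+$. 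Given arbitrary $g_1<g_2$ in $G$, setting $g=g_2 g_1^{-1}\in G^+$ gives $\alpha^{g_1}_{g_1}\,\mathfrak{C}\,\alpha^{g_2}_{g_2}$, so all idempotents of $\mathscr{B}(G)$ are $\mathfrak{C}$-equivalent, and a final invocation of Lemma~II.1.10 of~\cite{Petrich1984} delivers the group conclusion. The main obstacle is this ``descent'' below $e$; the conjugator $\alpha^{g_1}_e$ for $g_1<e$ necessarily lies in $\mathscr{B}(G)\setminus\mathscr{B}^+(G)$, which is precisely what makes the extension possible.
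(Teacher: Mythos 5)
Your proposal is correct and follows essentially the same route as the paper: first conjugate the $\mathfrak{C}$-equivalent pair of distinct idempotents by $\alpha^e_a$ and $\alpha^a_e$ to land a non-trivial equivalence inside $\mathscr{B}^+(G)$ and invoke Theorem~\ref{theorem-2.5}, then conjugate back down by elements of the form $\alpha^{g_1}_e$ with $g_1<e$ to identify the idempotents $\alpha^g_g$ with $g\notin G^+$. Your descent step is phrased slightly more uniformly (the paper conjugates $\alpha^e_e\,\mathfrak{C}\,\alpha^{g^{-1}}_{g^{-1}}$ by $\alpha^g_e$ for each $g\in G\setminus G^+$), but the mechanism and the computations are the same, and they check out against formula~(\ref{formula-1.1}).
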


\begin{proof}
Suppose that $\mathfrak{C}$ is a non-trivial congruence on the
semigroup $\mathscr{B}(G)$. Similar arguments as in the proof of
Theorem~\ref{theorem-2.5} imply that there exist distinct
idempotents $\alpha^a_a$ and $\alpha^b_b$ in the semigroup
$\mathscr{B}(G)$ such that $\alpha^a_a\mathfrak{C}\alpha^b_b$ and
$\alpha^b_b\preccurlyeq\alpha^a_a$, for $a,b\in G$ with $a\leqslant
b$ in $G$. Then we have that
\begin{equation*}
    \alpha^e_a\cdot\alpha^a_a\cdot\alpha^a_e=\alpha^e_e \qquad
    \hbox{ and } \qquad
    \alpha^e_a\cdot\alpha^b_b\cdot\alpha^a_e=
    \alpha^{b\cdot a^{-1}}_b\cdot\alpha^a_e=
    \alpha^{b\cdot a^{-1}}_{b\cdot a^{-1}},
\end{equation*}
and hence $\alpha^e_e\mathfrak{C}\alpha^{b\cdot a^{-1}}_{b\cdot
a^{-1}}$. Since $a\leqslant b$ in $G$ we conclude that $e\leqslant
b\cdot a^{-1}$ in $G$ and hence Theorem~\ref{theorem-2.5} implies
that $\alpha^c_c\mathfrak{C}\alpha^d_d$ for all $c,d\in G^+$.

We fix an arbitrary element $g\in G\setminus G^+$. Then we have that
$g^{-1}\in G^+\setminus\{e\}$ and hence
$\alpha^e_e\mathfrak{C}\alpha^{g^{-1}}_{g^{-1}}$. Since
\begin{equation*}
  \alpha^g_e\cdot\alpha^e_e\cdot\alpha^e_g=\alpha^g_g \quad
    \hbox{ and } \quad
  \alpha^g_e\cdot\alpha^{g^{-1}}_{g^{-1}}\cdot\alpha^e_g=
  \alpha^{g^{-1}\cdot e\cdot g}_{g^{-1}}\cdot\alpha^e_g=
  \alpha^{e}_{g^{-1}}\cdot\alpha^e_g=
  \alpha^{e}_{g^{-1}\cdot e\cdot g}=
  \alpha^e_e
\end{equation*}
we conclude that $\alpha^e_e\mathfrak{C}\alpha^{g}_{g}$. Therefore
all idempotents of the semigroup $\mathscr{B}(G)$ are
$\mathfrak{C}$-equivalent. Since the semigroup $\mathscr{B}(G)$ is
inverse we conclude that the quotient semigroup
$\mathscr{B}(G)/\mathfrak{C}$ contains only one idempotent and by
Lemma II.1.10 from \cite{Petrich1984} the semigroup
$\mathscr{B}(G)/\mathfrak{C}$ is a group.
\end{proof}

\begin{remark}\label{remark-2.7}
We observe that Proposition~\ref{proposition-1.4} implies that if
$G$ is a linearly ordered $d$-group then the statements similar to
Propositions~\ref{proposition-2.1} and \ref{proposition-2.3} and
Theorems~\ref{theorem-2.5} and \ref{theorem-2.6} hold for the
semigroups $\overset{_\circ}{\mathscr{B}}(G)$ and
$\overset{_\circ}{\mathscr{B}}{}^+(G)$.
\end{remark}

\begin{theorem}\label{theorem-2.12a}
If $G$ is the lexicographic product $A\times_\textbf{lex}H$ of
non-singleton linearly ordered groups $A$ and $H$, then the
semigroups $\mathscr{B}(G)$ and $\mathscr{B}^+(G)$ have non-trivial
non-group congruences.
\end{theorem}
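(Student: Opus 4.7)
The plan is to use the convex normal subgroup $N=\{e_A\}\times H$ of $G=A\times_{\textbf{lex}}H$ to build an explicit non-trivial, non-group congruence. As a group $G=A\times H$ (the lexicographic construction affects only the order), so $N$ is normal; moreover $N$ is convex in the linear order, since $(e_A,e_H)\leqslant(a,h)\leqslant(e_A,h')$ forces $a=e_A$. Non-singletonness of $A$ and $H$ ensures that $N$ is a proper non-trivial convex normal subgroup and that $G/N\cong A$ is non-trivial and linearly ordered.

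I would then define a relation $\mathfrak{C}_N$ on $\mathscr{B}(G)$ by
\begin{equation*}
\alpha^g_h\,\mathfrak{C}_N\,\alpha^k_l \;\Longleftrightarrow\; gN=kN \;\text{ and }\; hN=lN,
\end{equation*}
and verify that it is a congruence. The crucial technical ingredient is the lemma: \emph{if $hN=h'N$, then $(h\vee k)N=(h'\vee k)N$ for every $k\in G$}. This follows from convexity of $N$: when $hN=kN$ both joins lie in that common coset; when $hN\neq kN$ the linearity of the induced order on $G/N$ forces one coset to lie entirely above the other, so both joins come from the same ``winning'' coset. Granted the lemma, formula~(\ref{formula-1.1}) immediately gives multiplicative compatibility of $\mathfrak{C}_N$ (right compatibility; left is analogous).

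For non-triviality and the non-group property: picking $h_0>e_H$ in $H$, the idempotents $\alpha^{(e_A,e_H)}_{(e_A,e_H)}$ and $\alpha^{(e_A,h_0)}_{(e_A,h_0)}$ are distinct yet $\mathfrak{C}_N$-equivalent, so $\mathfrak{C}_N\neq\Delta_{\mathscr{B}(G)}$; picking $a_0>e_A$ in $A$, the idempotents $\alpha^{(e_A,e_H)}_{(e_A,e_H)}$ and $\alpha^{(a_0,e_H)}_{(a_0,e_H)}$ sit in different $\mathfrak{C}_N$-classes, so $\mathfrak{C}_N\neq\Omega_{\mathscr{B}(G)}$. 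The $\mathfrak{C}_N$-classes of idempotents of $\mathscr{B}(G)$ are indexed by the cosets in $G/N$, and $|G/N|\geqslant 2$, so the inverse quotient $\mathscr{B}(G)/\mathfrak{C}_N$ has more than one idempotent and hence is not a group. The identical construction, restricted to indices in $G^+$, yields the analogous non-trivial non-group congruence on $\mathscr{B}^+(G)$, and the witnesses above already lie in $G^+$. The principal obstacle is the verification of multiplicative compatibility, which rests entirely on the behaviour of the join $\vee$ modulo $N$; convexity (rather than mere normality) of $N$ is what makes that step go through.
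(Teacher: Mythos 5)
Your argument is correct, but the congruence you build is not the one the paper uses, and the verification runs along a different line. The paper defines $\sim_{\mathfrak{c}}$ by $\alpha^{(a_1,b_1)}_{(c_1,d_1)}\sim_{\mathfrak{c}}\alpha^{(a_2,b_2)}_{(c_2,d_2)}$ iff $a_1=a_2$, $c_1=c_2$ \emph{and} $d_1^{-1}b_1=d_2^{-1}b_2$; in your notation this is the meet of your $\mathfrak{C}_N$ with the least group congruence of Theorem~\ref{theorem-2.13}, hence strictly finer than $\mathfrak{C}_N$, and the paper checks compatibility by an exhaustive four-case computation of $\alpha^{(a_i,b_i)}_{(c_i,d_i)}\cdot\alpha^{(u,v)}_{(x,y)}$ and $\alpha^{(u,v)}_{(x,y)}\cdot\alpha^{(a_i,b_i)}_{(c_i,d_i)}$. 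You instead take the kernel of the natural reduction modulo the convex normal subgroup $N=\{e_A\}\times H$, and the whole burden is carried by your lemma that $hN=h'N$ implies $(h\vee k)N=(h'\vee k)N$ — which does hold, by exactly the convexity argument you give (if $hN=kN$ both joins lie in that coset; otherwise the join is decided by the $A$-coordinate alone) — after which formula~(\ref{formula-1.1}) plus normality of $N$ gives compatibility on both sides. Your route is shorter and more conceptual: it identifies the quotient as (isomorphic to) $\mathscr{B}(G/N)\cong\mathscr{B}(A)$ and would work for any proper non-trivial convex normal subgroup, whereas the paper's finer relation requires the longer hands-on case analysis but needs nothing beyond the explicit lexicographic coordinates. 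Your witnesses are sound on both counts ($\alpha^{(e_A,e_H)}_{(e_A,e_H)}\,\mathfrak{C}_N\,\alpha^{(e_A,h_0)}_{(e_A,h_0)}$ with $h_0>e_H$ rules out the identity congruence; the inequivalent idempotents $\alpha^{(e_A,e_H)}_{(e_A,e_H)}$ and $\alpha^{(a_0,e_H)}_{(a_0,e_H)}$ rule out both the universal and any group congruence), and since all three index pairs lie in $G^+$ the same construction settles $\mathscr{B}^+(G)$.
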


\begin{proof}
We define a relation $\sim_{\mathfrak{c}}$ on the semigroup
$\mathscr{B}(G)$ as follows:
\begin{equation*}
    \alpha^{(a_1,b_1)}_{(c_1,d_1)}\sim_{\mathfrak{c}}
    \alpha^{(a_2,b_2)}_{(c_2,d_2)} \quad \hbox{if and only if}
    \quad
    a_1=a_2, \,  c_1=c_2 \quad  \hbox{and}
    \quad  d^{-1}_1b_1=d_2^{-1}b_2.
\end{equation*}
Simple verifications show that $\sim_{\mathfrak{c}}$ is an
equivalence relation on the semigroup $\mathscr{B}(G)$.

Next we shall prove that $\sim_{\mathfrak{c}}$ is a congruence on
$\mathscr{B}(G)$. Suppose that
$\alpha^{(a_1,b_1)}_{(c_1,d_1)}\sim_{\mathfrak{c}}
\alpha^{(a_2,b_2)}_{(c_2,d_2)}$ for some
$\alpha^{(a_1,b_1)}_{(c_1,d_1)},\alpha^{(a_2,b_2)}_{(c_2,d_2)}\in
\mathscr{B}(G)$. Let $\alpha^{(u,v)}_{(x,y)}$ be an arbitrary
element of $\mathscr{B}(G)$. Then we have that
\begin{equation*}
\begin{split}
    \alpha^{(k_1,l_1)}_{(m_1,n_1)}=&\,
    \alpha^{(a_1,b_1)}_{(c_1,d_1)}\cdot \alpha^{(u,v)}_{(x,y)}=
\left\{
  \begin{array}{ll}
    \alpha^{(u,v)\cdot(c_1,d_1)^{-1}\cdot(a_1,b_1)}_{(x,y)}\!, &
     \hbox{if~} (c_1,d_1){\leqslant}(u,v); \\
    \alpha^{(a_1,b_1)}_{(c_1,d_1)\cdot(u,v)^{-1}\cdot(x,y)}, &
     \hbox{if~} (u,v){\leqslant}(c_1,d_1)
  \end{array}
\right.=\\
= &
 \left\{
  \begin{array}{ll}
    \alpha^{(uc_1^{-1}a_1,vd_1^{-1}b_1)}_{(x,y)}\!, &
     \hbox{if~} (c_1,d_1){\leqslant}(u,v); \\
    \alpha^{(a_1,b_1)}_{(c_1u^{-1}x,d_1v^{-1}y)}, &
     \hbox{if~} (u,v){\leqslant}(c_1,d_1)
  \end{array}
\right.=\\
= &
 \left\{
  \begin{array}{ll}
    \alpha^{(uc_1^{-1}a_1,vd_1^{-1}b_1)}_{(x,y)}\!, &
     \hbox{if~} c_1<u; \\
    \alpha^{(a_1,vd_1^{-1}b_1)}_{(x,y)}\!, &
     \hbox{if~} c_1=u \hbox{ and } d_1\leqslant v; \\
    \alpha^{(a_1,b_1)}_{(c_1u^{-1}x,d_1v^{-1}y)}, &
     \hbox{if~} u<c_1;\\
    \alpha^{(a_1,b_1)}_{(x,d_1v^{-1}y)}, &
     \hbox{if~} u=c_1 \hbox{ and } v\leqslant d_1;
  \end{array}
\right.
\end{split}
\end{equation*}
and
\begin{equation*}
\begin{split}
    \alpha^{(k_2,l_2)}_{(m_2,n_2)}= &\,
    \alpha^{(a_2,b_2)}_{(c_2,d_2)}\cdot \alpha^{(u,v)}_{(x,y)}=
\left\{
  \begin{array}{ll}
    \alpha^{(u,v)\cdot(c_2,d_2)^{-1}\cdot(a_2,b_2)}_{(x,y)}\!, &
     \hbox{if~} (c_2,d_2){\leqslant}(u,v); \\
    \alpha^{(a_2,b_2)}_{(c_2,d_2)\cdot(u,v)^{-1}\cdot(x,y)}, &
     \hbox{if~} (u,v){\leqslant}(c_2,d_2)
  \end{array}
\right.=\\
 =& \left\{
  \begin{array}{ll}
    \alpha^{(uc_2^{-1}a_2,vd_2^{-1}b_2)}_{(x,y)}, &
     \hbox{if~} (c_2,d_2){\leqslant}(u,v); \\
    \alpha^{(a_2,b_2)}_{(c_2u^{-1}x,d_2v^{-1}y)}, &
     \hbox{if~} (u,v){\leqslant}(c_2,d_2),
  \end{array}
\right.=\\
= &
 \left\{
  \begin{array}{ll}
    \alpha^{(uc_2^{-1}a_2,vd_2^{-1}b_2)}_{(x,y)}\!, &
     \hbox{if~} c_2<u; \\
    \alpha^{(a_2,vd_2^{-1}b_2)}_{(x,y)}\!, &
     \hbox{if~} c_2=u \hbox{ and } d_2\leqslant v; \\
    \alpha^{(a_2,b_2)}_{(c_2u^{-1}x,d_2v^{-1}y)}, &
     \hbox{if~} u<c_2;\\
    \alpha^{(a_2,b_2)}_{(x,d_2v^{-1}y)}, &
     \hbox{if~} u=c_2 \hbox{ and } v\leqslant d_2;
  \end{array}
\right.
\end{split}
\end{equation*}
and since $a_1=a_2$,  $c_1=c_2$ and $d^{-1}_1b_1=d_2^{-1}b_2$ we
conclude that the following conditions hold:
\begin{itemize}
  \item[$(1)$] if $c_1=c_2<u$, then $k_1=uc_1^{-1}a_1=uc_2^{-1}a_2=k_2$,
   $m_1=x=m_2$ and
   \begin{equation*}
    n^{-1}_1l_1=y^{-1}vd_1^{-1}b_1=y^{-1}vd_2^{-1}b_2=n^{-1}_2l_2;
   \end{equation*}
  \item[$(2)$] if $c_1=c_2=u$ and $d_1\leqslant v$, then
   $k_1=a_1=a_2=k_2$, $m_1=x=m_2$ and
   \begin{equation*}
    n^{-1}_1l_1=y^{-1}vd_1^{-1}b_1=y^{-1}vd_2^{-1}b_2=n^{-1}_2l_2;
   \end{equation*}
  \item[$(3)$] if $u<c_1=c_2$, then $k_1=a_1=a_2=k_2$,
   $m_1=c_1u^{-1}x=c_2u^{-1}x=m_2$ and
   \begin{equation*}
    n^{-1}_1l_1=y^{-1}vd_1^{-1}b_1=y^{-1}vd_2^{-1}b_2=n^{-1}_2l_2;
   \end{equation*}
  \item[$(4)$] if $u=c_1=c_2$ and $v\leqslant d_1$, then
   $k_1=a_1=a_2=k_2$, $m_1=x=m_2$ and
   \begin{equation*}
    n^{-1}_1l_1=y^{-1}vd_1^{-1}b_1=y^{-1}vd_2^{-1}b_2=n^{-1}_2l_2.
   \end{equation*}
\end{itemize}
Hence we get that $\alpha^{(k_1,l_1)}_{(m_1,n_1)}
\sim_{\mathfrak{c}} \alpha^{(k_2,l_2)}_{(m_2,n_2)}$. Similarly we
have that
\begin{equation*}
\begin{split}
    \alpha^{(p_1,q_1)}_{(r_1,s_1)}= &\,
    \alpha^{(u,v)}_{(x,y)}\cdot \alpha^{(a_1,b_1)}_{(c_1,d_1)}=
\left\{
  \begin{array}{ll}
    \alpha^{(a_1,b_1)\cdot(x,y)^{-1}\cdot(u,v)}_{(c_1,d_1)}\!, &
     \hbox{if~} (x,y){\leqslant}(a_1,b_1); \\
    \alpha^{(u,v)}_{(x,y)\cdot(a_1,b_1)^{-1}\cdot(c_1,d_1)}, &
     \hbox{if~} (a_1,b_1){\leqslant}(x,y)
  \end{array}
\right.=\\
 = &\left\{
  \begin{array}{ll}
    \alpha^{(a_1x^{-1}u,b_1y^{-1}v)}_{(c_1,d_1)}\!, &
     \hbox{if~} (x,y){\leqslant}(a_1,b_1); \\
    \alpha^{(u,v)}_{(xa_1^{-1}c_1,yb_1^{-1}d_1)}, &
     \hbox{if~} (a_1,b_1){\leqslant}(x,y),
  \end{array}
\right.=\\
 = &\left\{
  \begin{array}{ll}
    \alpha^{(a_1x^{-1}u,b_1y^{-1}v)}_{(c_1,d_1)}\!, &
     \hbox{if~} x<a_1; \\
    \alpha^{(u,b_1y^{-1}v)}_{(c_1,d_1)}\!, &
     \hbox{if~} x=a_1 \hbox{ and } y\leqslant b_1; \\
    \alpha^{(u,v)}_{(xa_1^{-1}c_1,yb_1^{-1}d_1)}, &
     \hbox{if~} a_1<x;\\
    \alpha^{(u,v)}_{(c_1,yb_1^{-1}d_1)}, &
     \hbox{if~} a_1=x \hbox{ and } b_1\leqslant y,
  \end{array}
\right.
\end{split}
\end{equation*}
and
\begin{equation*}
\begin{split}
    \alpha^{(p_2,q_2)}_{(r_2,s_2)}= &\,
    \alpha^{(u,v)}_{(x,y)}\cdot \alpha^{(a_2,b_2)}_{(c_2,d_2)}=
\left\{
  \begin{array}{ll}
    \alpha^{(a_2,b_2)\cdot(x,y)^{-1}\cdot(u,v)}_{(c_2,d_2)}\!, &
     \hbox{if~} (x,y){\leqslant}(a_2,b_2); \\
    \alpha^{(u,v)}_{(x,y)\cdot(a_2,b_2)^{-1}\cdot(c_2,d_2)}, &
     \hbox{if~} (a_2,b_2){\leqslant}(x,y)
  \end{array}
\right.=\\
 =& \left\{
  \begin{array}{ll}
    \alpha^{(a_2x^{-1}u,b_2y^{-1}v)}_{(c_2,d_2)}\!, &
     \hbox{if~} (x,y){\leqslant}(a_2,b_2); \\
    \alpha^{(u,v)}_{(xa_2^{-1}c_2,yb_2^{-1}d_2)}, &
     \hbox{if~} (a_2,b_2){\leqslant}(x,y)
  \end{array}
\right.=\\
 = &\left\{
  \begin{array}{ll}
    \alpha^{(a_2x^{-1}u,b_2y^{-1}v)}_{(c_2,d_2)}\!, &
     \hbox{if~} x<a_2; \\
    \alpha^{(u,b_2y^{-1}v)}_{(c_2,d_2)}\!, &
     \hbox{if~} x=a_2 \hbox{ and } y\leqslant b_2; \\
    \alpha^{(u,v)}_{(xa_2^{-1}c_2,yb_2^{-1}d_2)}, &
     \hbox{if~} a_2<x;\\
    \alpha^{(u,v)}_{(c_2,yb_2^{-1}d_2)}, &
     \hbox{if~} a_2=x \hbox{ and } b_2\leqslant y,
  \end{array}
\right.
\end{split}
\end{equation*}
and since $a_1=a_2$,  $c_1=c_2$ and $d^{-1}_1b_1=d_2^{-1}b_2$ we
conclude that the following conditions hold:
\begin{itemize}
  \item[$(1)$] if $x<a_1=a_2$, then $p_1=a_1x^{-1}u=a_2x^{-1}u=p_2$,
   $r_1=c_1=c_2=r_2$ and
   \begin{equation*}
    s^{-1}_1q_1=d_1^{-1}b_1y^{-1}v=d_2^{-1}b_2y^{-1}v=s^{-1}_2q_2;
   \end{equation*}
  \item[$(2)$] if $x=a_1=a_2$ and $y\leqslant b_1$,
   then $p_1=u=p_2$,
   $r_1=c_1=c_2=r_2$ and
   \begin{equation*}
    s^{-1}_1q_1=d_1^{-1}b_1y^{-1}v=d_2^{-1}b_2y^{-1}v=s^{-1}_2q_2;
   \end{equation*}
  \item[$(3)$] if $a_1=a_2<x$, then $p_1=u=p_2$,
   $r_1=xa_1^{-1}c_1=xa_2^{-1}c_2=r_2$ and
   \begin{equation*}
    s^{-1}_1q_1=d_1^{-1}b_1y^{-1}v=d_2^{-1}b_2y^{-1}v=s^{-1}_2q_2;
   \end{equation*}
  \item[$(4)$] if $a_1=a_2=x$ and $b_1\leqslant y$, then $p_1=u=p_2$,
   $r_1=c_1=c_2=r_2$ and
   \begin{equation*}
    s^{-1}_1q_1=d_1^{-1}b_1y^{-1}v=d_2^{-1}b_2y^{-1}v=s^{-1}_2q_2.
   \end{equation*}
\end{itemize}
Hence we get that $\alpha^{(p_1,q_1)}_{(r_1,s_1)}
\sim_{\mathfrak{c}} \alpha^{(p_2,q_2)}_{(r_2,s_2)}$.

We fix any $a_1,a_2,b_1,b_2\in G$. If $a_1\neq a_2$ then we have
that the elements $\alpha^{(a_1,b_1)}_{(a_1,b_1)}$ and
$\alpha^{(a_2,b_2)}_{(a_2,b_2)}$ are idempotents of the semigroup
$\mathscr{B}(G)$, and moreover the elements
$\alpha^{(a_1,b_1)}_{(a_1,b_1)}$ and
$\alpha^{(a_2,b_2)}_{(a_2,b_2)}$ are not $\sim_c$-equivalent. Since
a homomorphic image of an idempotent is an idempotent too, we
conclude that $\left(\alpha^{(a_1,b_1)}_{(a_1,b_1)}\right)\pi_c
\neq\left(\alpha^{(a_2,b_2)}_{(a_2,b_2)}\right)\pi_c$, where
$\pi_c\colon\mathscr{B}(G)\rightarrow\mathscr{B}(G)/\!\sim_c$ is the
natural homomorphism which is generated by the congruence $\sim_c$
on the semigroup $\mathscr{B}(G)$. This implies that the quotient
semigroup $\mathscr{B}(G)/\!\sim_c$ is not a group, and hence
$\sim_c$ is not a group congruence on the semigroup on the semigroup
$\mathscr{B}(G)$.

The proof of the statement that the semigroup $\mathscr{B}^+(G)$ has
a non-trivial non-group congruence is similar.
\end{proof}

\begin{theorem}\label{theorem-2.12b}
Let $G$ be a commutative linearly ordered group. Then the following
conditions are equivalent:
\begin{itemize}
  \item[$(i)$] $G$ is archimedean;
  \item[$(ii)$] every non-trivial congruence on
   $\mathscr{B}(G)$ is a group congruence; \; and
  \item[$(iii)$] every non-trivial congruence on
   $\mathscr{B}^+(G)$ is a group congruence.
\end{itemize}
\end{theorem}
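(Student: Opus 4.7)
The plan is to observe that $(i)\Rightarrow(ii)$ and $(i)\Rightarrow(iii)$ are exactly Theorems~\ref{theorem-2.6} and \ref{theorem-2.5}, and to prove the two reverse implications by contrapositive: assuming $G$ is \emph{not} archimedean, I construct non-trivial non-group congruences on both $\mathscr{B}(G)$ and $\mathscr{B}^+(G)$.

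Since $G$ is commutative linearly ordered and not archimedean, there exist $a,b\in G^+\setminus\{e\}$ with $a^n<b$ for every positive integer $n$. Then
\begin{equation*}
H=\{g\in G\mid a^{-n}\leqslant g\leqslant a^n\text{ for some }n\in\mathbb{N}\}
\end{equation*}
is a proper non-trivial convex subgroup of $G$: it contains $a$, omits $b$, and is closed under products and inverses by monotonicity. In particular $G/H$ has more than one element, with $b$ serving as a positive representative of a non-identity coset.

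On $\mathscr{B}(G)$ I introduce the relation
\begin{equation*}
\alpha^{g_1}_{h_1}\sim_H\alpha^{g_2}_{h_2}\quad\Longleftrightarrow\quad\exists\, z\in H:\ g_2=g_1 z\text{ and }h_2=h_1 z,
\end{equation*}
equivalently $g_1H=g_2H$, $h_1H=h_2H$, and $g_1^{-1}g_2=h_1^{-1}h_2$. It is plainly an equivalence relation. For $z\in H\setminus\{e\}$ the distinct idempotents $\alpha^e_e$ and $\alpha^z_z$ are $\sim_H$-related, while $\alpha^e_e$ and $\alpha^b_b$ are not; hence, once $\sim_H$ is shown to be a congruence, it is non-trivial and the quotient $\mathscr{B}(G)/\!\sim_H$ has at least two distinct idempotents and is therefore not a group.

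The main step---and principal obstacle---is to verify the compatibility of $\sim_H$ with multiplication. Using the product formula of Proposition~\ref{proposition-1.1}$(iv)$, I compare $\alpha^g_h\cdot\alpha^u_v$ with $\alpha^{gz}_{hz}\cdot\alpha^u_v$ (and symmetrically on the left). When the comparisons of $h$ and of $hz$ with $u$ fall in the same regime $<,=,>$, commutativity of $G$ makes the two products either coincide outright or differ only by the common factor $z$ on source and target. When the regimes differ, $u$ must lie between $h$ and $hz$, so by convexity of $H$ we have $uh^{-1}\in H$, which is precisely the element needed to absorb the discrepancy into $\sim_H$. This case analysis parallels the explicit computation in the proof of Theorem~\ref{theorem-2.12a}, now carried out with an abstract convex subgroup in place of a lex-product direct factor. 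For $\mathscr{B}^+(G)$ one restricts $\sim_H$ to pairs $\alpha^{g_i}_{h_i}$ with indices in $G^+$; since $e,z,b\in G^+$ and $\mathscr{B}^+(G)$ is a subsemigroup of $\mathscr{B}(G)$, the restriction is again a non-trivial non-group congruence, yielding $(iii)\Rightarrow(i)$ as well.
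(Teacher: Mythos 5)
Your proof is correct, and the hard direction $(ii),(iii)\Rightarrow(i)$ takes a genuinely different route from the paper. The paper argues by contradiction via the Hahn embedding theorem: a non-archimedean commutative linearly ordered group is realized as a lexicographic product, a non-maximal index is used to split it as $A\times_\textbf{lex}H$ with both factors non-singleton, and then Theorem~\ref{theorem-2.12a} supplies the non-trivial non-group congruence $\sim_{\mathfrak{c}}$. You instead extract a proper non-trivial \emph{convex} subgroup $H$ directly from a witness to non-archimedeanness and define $\sim_H$ by simultaneous right translation of both indices by elements of $H$; your relation specializes exactly to $\sim_{\mathfrak{c}}$ when $H$ happens to be a lexicographic direct factor, so it is the natural generalization of the paper's congruence. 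What your approach buys is the elimination of Hahn's theorem (a deep structure result) and of the need for $H$ to split off as a factor: convexity alone handles the cross-regime cases of the compatibility check (if $h<u<hz$ then $h^{-1}u$ lies between $e$ and $z$, hence in $H$), while commutativity handles the same-regime cases. What it costs is that the congruence verification must be redone abstractly rather than quoted from Theorem~\ref{theorem-2.12a}; your write-up only sketches this case analysis, but I checked that all regimes (including the boundary cases $h=u$ and $hz=u$, and the symmetric left-multiplication cases) close up correctly, and one-sided compatibility on each side suffices for an equivalence relation to be a congruence. The non-triviality and non-group conclusions ($\alpha^e_e\sim_H\alpha^a_a$ with $a\neq e$, while $\alpha^e_e\not\sim_H\alpha^b_b$ gives two idempotents in the quotient) and the restriction to $\mathscr{B}^+(G)$ are all sound.
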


\begin{proof}
Implications $(i)\Rightarrow(ii)$ and $(i)\Rightarrow(iii)$ follow
from Theorems~\ref{theorem-2.6} and \ref{theorem-2.5}, respectively.

$(ii)\Rightarrow(i)$ Suppose the contrary that there exists a
non-archimedean commutative linearly ordered group $G$ such that
every non-trivial congruence on $\mathscr{B}(G)$ is a group
congruence. Then by Hahn Theorem (see \cite{Hahn1907} or
\cite[Section~VII.3, Theorem~1]{KokorinKopytov1972}) $G$ is
isomorphic to a lexicographic product
$\displaystyle{\prod_{\alpha\in\mathscr{J}}}
{}_\textbf{lex}H_{\alpha}$ of some family of non-singleton subgroups
$\{H_{\alpha}\mid\alpha\in\mathscr{J}\}$ of the additive group of
real numbers with a non-singleton linearly ordered index set
$\mathscr{J}$. We fix a non-maximal element
$\alpha_0\in\mathscr{J}$, and put
\begin{equation*}
A=\prod{}_\textbf{lex}\{H_{\alpha}\mid\alpha\leqslant\alpha_0\}
\qquad \hbox{ and } \qquad
H=\prod{}_\textbf{lex}\{H_{\alpha}\mid\alpha_0<\alpha\}.
\end{equation*}
Then we have that $G$ is isomorphic to a lexicographic product
$A\times_\textbf{lex}H$ of non-singleton linearly ordered groups $A$
and $H$, and hence by Theorem~\ref{theorem-2.12a} the semigroup
$\mathscr{B}(G)$ has a non-trivial non-group congruence. The
obtained contradiction implies that the group $G$ is archimedean.

The proof of implication $(iii)\Rightarrow(i)$ is similar to
$(ii)\Rightarrow(i)$.
\end{proof}

On the semigroup $\overline{\mathscr{B}}(G)$ (resp.,
$\overline{\mathscr{B}}\,^+(G)$) we determine a relation
$\sim_{\mathfrak{id}}$ in the following way. We define a map
$\mathfrak{id}\colon\overline{\mathscr{B}}(G)\rightarrow
\overline{\mathscr{B}}(G)$ (resp., $\mathfrak{id}\colon
\overline{\mathscr{B}}\,^+(G)\rightarrow
\overline{\mathscr{B}}\,^+(G)$) by the formulae
$(\alpha^g_h)\mathfrak{id}=\overset{_\circ}{\alpha}{}^g_h$ and
$(\overset{_\circ}{\alpha}{}^g_h)\mathfrak{id}=\alpha^g_h$ for
$g,h\in G$ (resp., $g,h\in G^+$). We put
\begin{equation*}
    \alpha\sim_{\mathfrak{id}}\beta \quad \hbox{if and only if}
    \quad \alpha=\beta \; \hbox{ or } \;
    (\alpha)\mathfrak{id}=\beta \; \hbox{ or } \;
    (\beta)\mathfrak{id}=\alpha,
\end{equation*}
$\hbox{for } \;
    \alpha,\beta\in\overline{\mathscr{B}}(G)
    \;  (\hbox{resp.,~}\alpha,\beta\in\overline{\mathscr{B}}\,^+(G)).$
Simple verifications show that $\sim_{\mathfrak{id}}$ is an
equivalence relation on the semigroup $\overline{\mathscr{B}}(G)$
(resp., $\overline{\mathscr{B}}\,^+(G)$).

\begin{proposition}\label{proposition-2.10}
If $G$ is a linearly ordered $d$-group then $\sim_{\mathfrak{id}}$
is a congruence on semigroups $\overline{\mathscr{B}}(G)$ and
$\overline{\mathscr{B}}\,^+(G)$. Moreover, quotient semigroups
$\overline{\mathscr{B}}(G)/\!\sim_{\mathfrak{id}}$ and
$\overline{\mathscr{B}}(G)\,^+/\!\sim_{\mathfrak{id}}$ are
isomorphic to semigroups $\mathscr{B}(G)$ and $\mathscr{B}\,^+(G)$,
respectively.
\end{proposition}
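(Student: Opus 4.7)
The plan is to verify the congruence property by a direct case analysis using the multiplication formulas (\ref{formula-1.1}), (\ref{formula-1.2}) and the mixed-product formulas appearing in the proof of Proposition~\ref{proposition-1.5}, and then to construct the isomorphism via the obvious ``forget-the-circle'' map.

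First I would note that the equivalence classes of $\sim_{\mathfrak{id}}$ are precisely the two-element sets $\{\alpha^g_h,\overset{_\circ}{\alpha}{}^g_h\}$ for $g,h\in G$ (resp.\ $g,h\in G^+$), since $\mathfrak{id}$ is an involution swapping $\mathscr{B}(G)$ and $\overset{_\circ}{\mathscr{B}}(G)$ (resp.\ $\mathscr{B}^+(G)$ and $\overset{_\circ}{\mathscr{B}}{}^+(G)$) while preserving the $(g,h)$ indices. Consequently, to show that $\sim_{\mathfrak{id}}$ is a congruence it suffices to check that, for any $\gamma\in\overline{\mathscr{B}}(G)$, the products $\alpha^g_h\cdot\gamma$ and $\overset{_\circ}{\alpha}{}^g_h\cdot\gamma$ lie in the same $\sim_{\mathfrak{id}}$-class (and likewise for left multiplication).

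The computation is a bookkeeping exercise. Writing $\gamma=\alpha^k_l$ or $\gamma=\overset{_\circ}{\alpha}{}^k_l$, I compare the four products obtained by independently toggling the circle on each factor. In each of the three subcases $h<k$, $h=k$, $h>k$, inspection of formulas (\ref{formula-1.1}), (\ref{formula-1.2}), and the mixed formulas from Proposition~\ref{proposition-1.5} shows that the upper index of the result is always $(h\vee k)\cdot h^{-1}\cdot g$ and the lower index is always $(h\vee k)\cdot k^{-1}\cdot l$, independently of which of the four products is being computed. What may differ between the four products is only whether the outcome lies in $\mathscr{B}(G)$ or in $\overset{_\circ}{\mathscr{B}}(G)$, and any such discrepancy is precisely one application of $\mathfrak{id}$. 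Hence all four products are $\sim_{\mathfrak{id}}$-equivalent, establishing the congruence property. The argument for $\overline{\mathscr{B}}\,^+(G)$ is identical, using property 1) of the positive cone to ensure that all computed indices remain in $G^+$.

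For the second assertion, I would define $\Psi\colon\overline{\mathscr{B}}(G)/\!\sim_{\mathfrak{id}}\rightarrow\mathscr{B}(G)$ by $\Psi\bigl([\alpha^g_h]\bigr)=\Psi\bigl([\overset{_\circ}{\alpha}{}^g_h]\bigr)=\alpha^g_h$. Well-definedness is immediate from the description of the classes. That $\Psi$ is a homomorphism is exactly the content of the index computation performed above: the product of two classes has indices given by the same formula as the product in $\mathscr{B}(G)$. Bijectivity is trivial because each class contains exactly one element of $\mathscr{B}(G)$. The corresponding map for $\overline{\mathscr{B}}\,^+(G)$ is constructed in the same way. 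The main and only obstacle is thus the orderly execution of the twelve-way case comparison described above; there is no conceptual subtlety once the structure of the $\sim_{\mathfrak{id}}$-classes is observed.
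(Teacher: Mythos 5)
Your proof is correct and follows essentially the same route as the paper: both verify the congruence property by direct computation with the multiplication formulas (\ref{formula-1.1}), (\ref{formula-1.2}) and the mixed-product formulas from Proposition~\ref{proposition-1.5}, and both obtain the quotient via the ``forget-the-circle'' bijection onto $\mathscr{B}(G)$ (resp.\ $\mathscr{B}^+(G)$). Your single uniform observation --- that the indices of any product are $(h\vee k)\cdot h^{-1}\cdot g$ and $(h\vee k)\cdot k^{-1}\cdot l$ no matter which factors carry circles --- neatly subsumes the paper's twelve-case enumeration.
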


\begin{proof}
It is sufficient to show that if $\alpha\sim_{\mathfrak{id}}\beta$
and $\gamma\sim_{\mathfrak{id}}\delta$ then
$(\alpha\cdot\gamma)\sim_{\mathfrak{id}}(\beta\cdot\delta)$ for
$\alpha,\beta,\gamma,\delta\in\overline{\mathscr{B}}(G)$ (resp.,
$\alpha,\beta,\gamma,\delta\in\overline{\mathscr{B}}\,^+(G)$). Since
the case $\alpha=\beta$ and $\gamma=\delta$ is trivial we consider
the following cases:
\begin{itemize}
  \item[$(i)$] $\alpha=\alpha^a_b$,
       $\beta=\overset{_\circ}{\alpha}{}^a_b$ and
       $\gamma=\delta=\alpha^c_d$;

  \item[$(ii)$] $\alpha=\alpha^a_b$,
       $\beta=\overset{_\circ}{\alpha}{}^a_b$ and
       $\gamma=\delta=\overset{_\circ}{\alpha}{}^c_d$;

  \item[$(iii)$] $\alpha=\overset{_\circ}{\alpha}{}^a_b$,
       $\beta=\alpha^a_b$ and
       $\gamma=\delta=\alpha^c_d$;

  \item[$(iv)$] $\alpha=\overset{_\circ}{\alpha}{}^a_b$,
       $\beta=\alpha^a_b$ and
       $\gamma=\delta=\overset{_\circ}{\alpha}{}^c_d$;

  \item[$(v)$] $\alpha=\alpha^a_b$,
       $\beta=\overset{_\circ}{\alpha}{}^a_b$,
       $\gamma=\overset{_\circ}{\alpha}{}^c_d$ and
       $\delta=\alpha^c_d$;

  \item[$(vi)$] $\alpha=\alpha^a_b$,
       $\beta=\overset{_\circ}{\alpha}{}^a_b$, $\gamma=\alpha^c_d$
       and  $\delta=\overset{_\circ}{\alpha}{}^c_d$;

  \item[$(vii)$] $\alpha=\overset{_\circ}{\alpha}{}^a_b$,
       $\beta=\alpha^a_b$,
       $\gamma=\overset{_\circ}{\alpha}{}^c_d$ and
       $\delta=\alpha^c_d$;

  \item[$(viii)$] $\alpha=\overset{_\circ}{\alpha}{}^a_b$,
       $\beta=\alpha^a_b$,
       $\gamma=\alpha^c_d$ and
       $\delta=\overset{_\circ}{\alpha}{}^c_d$;

  \item[$(ix)$] $\alpha=\beta=\alpha^a_b$,
       $\gamma=\overset{_\circ}{\alpha}{}^c_d$ and
       $\delta=\alpha^c_d$;

  \item[$(x)$] $\alpha=\beta=\overset{_\circ}{\alpha}{}^a_b$,
       $\gamma=\overset{_\circ}{\alpha}{}^c_d$ and
       $\delta=\alpha^c_d$;

  \item[$(xi)$] $\alpha=\beta=\alpha^a_b$,
       $\gamma=\alpha^c_d$ and
       $\delta=\overset{_\circ}{\alpha}{}^c_d$;\; and

  \item[$(xii)$] $\alpha=\beta=\overset{_\circ}{\alpha}{}^a_b$,
       $\gamma=\alpha^c_d$ and
       $\delta=\overset{_\circ}{\alpha}{}^c_d$,
\end{itemize}
where $a,b,c,d\in G$ (resp., $a,b,c,d\in G^+$).

In case $(i)$ we have that
\begin{equation*}
    \alpha\cdot\gamma=\alpha^a_b\cdot\alpha^c_d{=}\!
\left\{
  \begin{array}{ll}
    \alpha^{c\cdot b^{-1}\cdot a}_d, & \hbox{if } b<c; \\
    \alpha^a_d, & \hbox{if } b=c; \\
    \alpha^a_{b\cdot c^{-1}\cdot d}, & \hbox{if } b>c,
  \end{array}
\right. \, \hbox{and } \,
    \beta\cdot\delta=\overset{_\circ}{\alpha}{}^a_b\cdot\alpha^c_d{=}\!
\left\{
  \begin{array}{ll}
    \overset{_\circ}{\alpha}{}^{c\cdot b^{-1}\cdot a}_d,
      & \hbox{if } b<c; \\
    \overset{_\circ}{\alpha}{}^a_d, & \hbox{if } b=c; \\
    \alpha^a_{b\cdot c^{-1}\cdot d}, & \hbox{if } b>c,
  \end{array}
\right.
\end{equation*}
and hence
$(\alpha\cdot\gamma)\sim_{\mathfrak{id}}(\beta\cdot\delta)$ in
$\overline{\mathscr{B}}(G)$ (resp.,
$\overline{\mathscr{B}}\,^+(G)$). In other cases verifications are
similar.

Since the restriction
$\Phi_{\sim_{\mathfrak{id}}}|_{\mathscr{B}(G)}\colon\mathscr{B}(G)
\rightarrow\mathscr{B}(G)$ of the natural homomorphism
$\Phi_{\sim_{\mathfrak{id}}}\colon\overline{\mathscr{B}}(G)
\rightarrow\mathscr{B}(G)$ is a bijective map we conclude that the
semigroup $(\overline{\mathscr{B}}(G))\Phi_{\sim_{\mathfrak{id}}}$
is isomorphic to the semigroup $\mathscr{B}(G)$. Similar arguments
show that the semigroup
$\overline{\mathscr{B}}\,^+(G)/\!\sim_{\mathfrak{id}}$ is isomorphic
to $\mathscr{B}\,^+(G)$.
\end{proof}

\begin{theorem}\label{theorem-2.11} Let $G$ be an archimedean
linearly ordered $d$-group. If $\mathfrak{C}$ is a non-trivial
congruence on $\overline{\mathscr{B}}(G)$ $($resp.,
$\overline{\mathscr{B}}\,^+(G))$ then the quotient semigroup
$\overline{\mathscr{B}}(G)/\mathfrak{C}$ $($resp.,
$\overline{\mathscr{B}}\,^+(G)/\mathfrak{C})$ is either a group or
$\overline{\mathscr{B}}(G)/\mathfrak{C}$ $($resp.,
$\overline{\mathscr{B}}\,^+(G)/\mathfrak{C})$ is isomorphic to the
semigroup $\mathscr{B}(G)$ $($resp., $\mathscr{B}\,^+(G))$.
\end{theorem}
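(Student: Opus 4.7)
The plan is to classify non-trivial congruences on $\overline{\mathscr{B}}(G)$ according to how they act on the linearly ordered band $E\left(\overline{\mathscr{B}}(G)\right)$ described by Proposition~\ref{proposition-2.8}$(iii)$. The dichotomy will be: either $\mathfrak{C}$ identifies two idempotents with distinct $G$-parameters, in which case archimedean propagation in the style of Theorems~\ref{theorem-2.5} and \ref{theorem-2.6} combined with Lemma~\ref{lemma-2.4} will collapse $E\left(\overline{\mathscr{B}}(G)\right)$ to a single $\mathfrak{C}$-class (the group case), or the only identifications are layer-swaps $\alpha^g_g\leftrightarrow\overset{_\circ}{\alpha}{}^g_g$, in which case one argues $\mathfrak{C}=\,\sim_{\mathfrak{id}}$ and Proposition~\ref{proposition-2.10} delivers the $\mathscr{B}(G)$ case.

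First I would pass from a non-trivial congruence to an idempotent identification. Suppose $\alpha\mathfrak{C}\beta$ with $\alpha\neq\beta$; since every $\mathscr{H}$-class of $\overline{\mathscr{B}}(G)$ is a singleton by Proposition~\ref{proposition-2.8}$(vi)$, Lemma~III.1.1 of \cite{Petrich1984} yields two distinct idempotents $e_1,e_2\in E\left(\overline{\mathscr{B}}(G)\right)$ with $e_1\mathfrak{C}e_2$. Arranging $e_1\preccurlyeq e_2$ and invoking Lemma~\ref{lemma-2.4} in the linearly ordered band, the entire interval $[e_1,e_2]$ is $\mathfrak{C}$-collapsed. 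By H\"older's Theorem an archimedean linearly ordered $d$-group embeds densely into $(\mathbb{R},+)$, so between any two distinct $G$-parameters there are infinitely many others; using Proposition~\ref{proposition-2.8}$(ii)$ one checks that the only covering pairs in this chain are exactly $\left\{\overset{_\circ}{\alpha}{}^g_g,\alpha^g_g\right\}$. Hence either our identification is of this same-$g$ type, or the interval $[e_1,e_2]$ contains two distinct idempotents $\alpha^{c_1}_{c_1},\alpha^{c_2}_{c_2}$ from $\mathscr{B}(G)$ together with two distinct idempotents $\overset{_\circ}{\alpha}{}^{c_1}_{c_1},\overset{_\circ}{\alpha}{}^{c_2}_{c_2}$ from $\overset{_\circ}{\mathscr{B}}(G)$.

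In this latter situation (Case~A), the restrictions $\mathfrak{C}|_{\mathscr{B}(G)}$ and $\mathfrak{C}|_{\overset{_\circ}{\mathscr{B}}(G)}$ are non-identity congruences and so, by Theorem~\ref{theorem-2.6} applied to $\mathscr{B}(G)$ and the analogous statement for $\overset{_\circ}{\mathscr{B}}(G)$ obtained via Proposition~\ref{proposition-1.4} and Remark~\ref{remark-2.7}, are group congruences; thus all $\alpha^g_g$ fall into one $\mathfrak{C}$-class and all $\overset{_\circ}{\alpha}{}^g_g$ fall into one $\mathfrak{C}$-class. To merge the layers, for each $g\in G$ pick $h\in G$ with $h>g$; Proposition~\ref{proposition-2.8}$(ii)(c),(d)$ yields the chain $\alpha^h_h\preccurlyeq\overset{_\circ}{\alpha}{}^g_g\preccurlyeq\alpha^g_g$, and since $\alpha^h_h\mathfrak{C}\alpha^g_g$, Lemma~\ref{lemma-2.4} absorbs $\overset{_\circ}{\alpha}{}^g_g$ into the common class. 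Consequently $E\left(\overline{\mathscr{B}}(G)\right)$ forms a single $\mathfrak{C}$-class, and Lemma~II.1.10 of \cite{Petrich1984} shows that $\overline{\mathscr{B}}(G)/\mathfrak{C}$ is a group.

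In the remaining situation (Case~B) the only idempotent identifications are $\alpha^g_g\mathfrak{C}\overset{_\circ}{\alpha}{}^g_g$. A direct calculation from the tables in Proposition~\ref{proposition-1.5} gives $\alpha^h_g\cdot\alpha^g_g\cdot\alpha^g_h=\alpha^h_h$ and $\alpha^h_g\cdot\overset{_\circ}{\alpha}{}^g_g\cdot\alpha^g_h=\overset{_\circ}{\alpha}{}^h_h$, so one such pair propagates to every $h\in G$, yielding $\sim_{\mathfrak{id}}\,\subseteq\mathfrak{C}$. For the reverse inclusion, if $\xi\mathfrak{C}\eta$ with $(\xi,\eta)\notin\,\sim_{\mathfrak{id}}$, then Lemma~III.1.1 of \cite{Petrich1984} produces idempotent identifications $\xi\xi^{-1}\mathfrak{C}\eta\eta^{-1}$ and $\xi^{-1}\xi\mathfrak{C}\eta^{-1}\eta$ all of which must be of the allowed same-$g$ form, and a brief case check over the four possibilities for the layers of $\xi$ and $\eta$ forces $(\xi,\eta)\in\,\sim_{\mathfrak{id}}$, a contradiction. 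Hence $\mathfrak{C}=\,\sim_{\mathfrak{id}}$ and Proposition~\ref{proposition-2.10} gives $\overline{\mathscr{B}}(G)/\mathfrak{C}\cong\mathscr{B}(G)$. The main obstacle will be verifying the covering-pair claim in the linearly ordered band and the four-way bookkeeping that closes Case~B; the argument for $\overline{\mathscr{B}}\,^+(G)$ is strictly parallel, replacing $G$ by $G^+$ and Theorem~\ref{theorem-2.6} by Theorem~\ref{theorem-2.5}.
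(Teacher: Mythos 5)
Your argument is correct and follows essentially the same route as the paper's proof: pass from a non-trivial congruence to a pair of distinct $\mathfrak{C}$-related idempotents in the linearly ordered band, and split according to whether their $G$-parameters differ (in which case Lemma~\ref{lemma-2.4} together with the archimedean propagation of Theorems~\ref{theorem-2.5}--\ref{theorem-2.6} collapses all idempotents and yields a group) or the pair is a covering pair $\bigl\{\overset{_\circ}{\alpha}{}^g_g,\alpha^g_g\bigr\}$ (in which case conjugation shows $\mathfrak{C}$ contains, and by $\mathscr{H}$-triviality equals, $\sim_{\mathfrak{id}}$, so Proposition~\ref{proposition-2.10} applies). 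The only difference is that you spell out two points the paper leaves implicit --- the layer-merging via $\alpha^h_h\preccurlyeq\overset{_\circ}{\alpha}{}^g_g\preccurlyeq\alpha^g_g$ and the reverse inclusion $\mathfrak{C}\subseteq\,\sim_{\mathfrak{id}}$ --- which is a welcome addition rather than a departure.
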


\begin{proof}
Since the subsemigroup of idempotents of the semigroup
$\overline{\mathscr{B}}(G)$ is linearly ordered we have that similar
arguments as in the proof of Theorem~\ref{theorem-2.5} imply that
there exist distinct idempotents $\varepsilon$ and $\iota$ of
$\overline{\mathscr{B}}(G)$ such that $\varepsilon\mathfrak{C}\iota$
and $\varepsilon\preccurlyeq\iota$. If the set
$(\varepsilon,\iota)=\{\upsilon\in E(\overline{\mathscr{B}}(G))\mid
\varepsilon\prec\upsilon\prec\iota\}$ is non-empty then
Lemma~\ref{lemma-2.4} and Theorem~\ref{theorem-2.5} imply that the
quotient semigroup $\overline{\mathscr{B}}(G)/\mathfrak{C}$ is
inverse and has only one idempotent, and hence by Lemma~II.1.10 from
\cite{Petrich1984} it is a group. Otherwise there exists $g\in G$
such that $\iota=\alpha^g_g$ and
$\varepsilon=\overset{_\circ}{\alpha}{}^g_g$. Since
$\alpha^k_l=\alpha^k_g\cdot\alpha^g_g\cdot\alpha^g_l$ and
$\overset{_\circ}{\alpha}{}^k_l=\alpha^k_g\cdot
\overset{_\circ}{\alpha}{}^g_g\cdot\alpha^g_l$ for every $k,l\in G$
we concludes that the congruence $\mathfrak{C}$ coincides with the
congruence $\sim_{\mathfrak{id}}$ on $\overline{\mathscr{B}}(G)$,
and hence by Proposition~\ref{proposition-2.10} the quotient
semigroup  $\overline{\mathscr{B}}(G)/\mathfrak{C}$ is isomorphic to
the semigroup $\mathscr{B}(G)$.

In the case of the semigroup $\overline{\mathscr{B}}\,^+(G)$ the
proof is similar.
\end{proof}

\begin{theorem}\label{theorem-2.13b}
Let $G$ be a commutative linearly ordered $d$-group. Then the
following conditions are equivalent:
\begin{itemize}
  \item[$(i)$] $G$ is archimedean;
  \item[$(ii)$] every non-trivial congruence on
   $\overset{_\circ}{\mathscr{B}}(G)$ is a group congruence; \; and
  \item[$(iii)$] every non-trivial congruence on
   $\overset{_\circ}{\mathscr{B}}{}^+(G)$ is a group congruence;
  \item[$(iv)$] the semigroup $\overline{\mathscr{B}}(G)$ has a
   unique non-trivial non-group congruence;
  \item[$(v)$] the semigroup $\overline{\mathscr{B}}\,^+(G)$ has a
   unique non-trivial non-group congruence.
\end{itemize}
\end{theorem}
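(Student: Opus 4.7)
The strategy is to reduce all five equivalences to two already-proved results: Theorem~\ref{theorem-2.12b} (which characterises archimedean commutative linearly ordered groups via the congruence lattice of $\mathscr{B}(G)$ and $\mathscr{B}^+(G)$) and Theorem~\ref{theorem-2.11} (which describes the quotients of $\overline{\mathscr{B}}(G)$ and $\overline{\mathscr{B}}\,^+(G)$ by a non-trivial congruence when $G$ is archimedean). First I would dispatch the equivalences $(i)\Leftrightarrow(ii)$ and $(i)\Leftrightarrow(iii)$: by Proposition~\ref{proposition-1.4} the semigroups $\overset{_\circ}{\mathscr{B}}(G)$ and $\overset{_\circ}{\mathscr{B}}{}^+(G)$ are isomorphic to $\mathscr{B}(G)$ and $\mathscr{B}^+(G)$ respectively, so the corresponding congruence lattices are isomorphic, and the result follows immediately from Theorem~\ref{theorem-2.12b} (this is essentially the content of Remark~\ref{remark-2.7}).

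For $(i)\Rightarrow(iv)$, assuming $G$ is archimedean, I would combine Theorem~\ref{theorem-2.11} with Proposition~\ref{proposition-2.10}: Theorem~\ref{theorem-2.11} says that any non-trivial congruence $\mathfrak{C}$ on $\overline{\mathscr{B}}(G)$ has quotient either a group or isomorphic to $\mathscr{B}(G)$; moreover, inspecting the proof of Theorem~\ref{theorem-2.11}, in the non-group case the congruence is forced to coincide with $\sim_{\mathfrak{id}}$ (because the $\mathfrak{C}$-related idempotents must be of the form $\alpha^g_g$ and $\overset{_\circ}{\alpha}{}^g_g$, and every element of $\overline{\mathscr{B}}(G)$ is of the form $\alpha^k_g\cdot\varepsilon\cdot\alpha^g_l$ for an idempotent $\varepsilon$ between these two). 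By Proposition~\ref{proposition-2.10}, $\sim_{\mathfrak{id}}$ is itself such a non-group congruence, which gives existence and uniqueness. The proof of $(i)\Rightarrow(v)$ is analogous.

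For the converse $(iv)\Rightarrow(i)$ (and $(v)\Rightarrow(i)$), I would argue by contraposition. Suppose $G$ is not archimedean. By Theorem~\ref{theorem-2.12b} there exists a non-trivial non-group congruence $\mathfrak{C}$ on $\mathscr{B}(G)$. Using that $\Phi_{\sim_{\mathfrak{id}}}\colon\overline{\mathscr{B}}(G)\to\mathscr{B}(G)$ is a surjective homomorphism by Proposition~\ref{proposition-2.10}, I would define
\begin{equation*}
    \widetilde{\mathfrak{C}}=\{(\alpha,\beta)\in\overline{\mathscr{B}}(G)\times\overline{\mathscr{B}}(G)\mid
    ((\alpha)\Phi_{\sim_{\mathfrak{id}}},(\beta)\Phi_{\sim_{\mathfrak{id}}})\in\mathfrak{C}\},
\end{equation*}
which is the pullback congruence. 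Then $\widetilde{\mathfrak{C}}\supseteq\sim_{\mathfrak{id}}$, and $\widetilde{\mathfrak{C}}\neq\sim_{\mathfrak{id}}$ because $\mathfrak{C}$ is non-trivial. The quotient $\overline{\mathscr{B}}(G)/\widetilde{\mathfrak{C}}$ is isomorphic to $\mathscr{B}(G)/\mathfrak{C}$, which is not a group, so $\widetilde{\mathfrak{C}}$ is a non-trivial non-group congruence distinct from $\sim_{\mathfrak{id}}$, contradicting~$(iv)$. The argument for $(v)\Rightarrow(i)$ is identical with $\overline{\mathscr{B}}\,^+(G)$ and $\mathscr{B}^+(G)$ in place of $\overline{\mathscr{B}}(G)$ and $\mathscr{B}(G)$.

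The main obstacle is the uniqueness claim in $(i)\Rightarrow(iv)$ and $(i)\Rightarrow(v)$: Theorem~\ref{theorem-2.11} as stated only describes the isomorphism type of the quotient, not the congruence itself, so I would need to revisit the proof of that theorem to extract the stronger statement that the non-group congruence must in fact equal $\sim_{\mathfrak{id}}$. Fortunately the key line in the proof of Theorem~\ref{theorem-2.11}—that $\alpha^k_l=\alpha^k_g\cdot\alpha^g_g\cdot\alpha^g_l$ and $\overset{_\circ}{\alpha}{}^k_l=\alpha^k_g\cdot\overset{_\circ}{\alpha}{}^g_g\cdot\alpha^g_l$ together force $\mathfrak{C}\supseteq{\sim_{\mathfrak{id}}}$, after which maximality of $\sim_{\mathfrak{id}}$ among non-group congruences yields equality—already does this work, so the proof reduces to a careful citation rather than new combinatorial input.
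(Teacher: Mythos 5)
Your proposal is correct and follows essentially the same route as the paper: $(i)\Leftrightarrow(ii)\Leftrightarrow(iii)$ via Proposition~\ref{proposition-1.4} and Theorem~\ref{theorem-2.12b}; $(i)\Rightarrow(iv),(v)$ via Theorem~\ref{theorem-2.11} and Proposition~\ref{proposition-2.10}; and $(iv),(v)\Rightarrow(i)$ by lifting a non-trivial non-group congruence from $\mathscr{B}(G)$ to $\overline{\mathscr{B}}(G)$ --- your pullback $\widetilde{\mathfrak{C}}$ along $\Phi_{\sim_{\mathfrak{id}}}$ is exactly the paper's case-by-case-defined $\overline{\sim}$, just packaged more cleanly. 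Your observation that the uniqueness claim in $(iv)$ requires the content of the \emph{proof} of Theorem~\ref{theorem-2.11} (namely that a non-group congruence is forced to coincide with $\sim_{\mathfrak{id}}$, using maximality of $\sim_{\mathfrak{id}}$ among non-group congruences when $G$ is archimedean) rather than merely its statement is a correct and welcome refinement of the paper's terser citation.
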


\begin{proof}
The equivalence of statements $(i)$, $(ii)$ and $(iii)$ follows from
Proposition~\ref{proposition-1.4} and Theorem~\ref{theorem-2.12b}.
Also Theorem~\ref{theorem-2.11} implies that  implications
$(i)\Rightarrow(iv)$ and $(i)\Rightarrow(v)$ hold.

Next we shall show that implication $(iv)\Rightarrow(i)$ holds.
Suppose the contrary: there exists a commutative linearly ordered
non-archimedean $d$-group $G$ such that the semigroup
$\overline{\mathscr{B}}(G)$ has a unique non-trivial non-group
congruence. Then by Proposition~\ref{proposition-2.10} we have that
$\sim_{\mathfrak{id}}$ is a unique non-trivial non-group congruence
on the semigroup $\overline{\mathscr{B}}(G)$. Therefore, similarly
as in the proof of Theorem~\ref{theorem-2.12b} we get that $G$ is
isomorphic to the lexicographic product $A\times_\textbf{lex}H$ of
non-singleton linearly ordered groups $A$ and $H$, and hence by
Theorem~\ref{theorem-2.12a} we have that the semigroup
$\mathscr{B}(G)$ has a non-trivial non-group congruence $\sim$. We
define a relation $\overline{\sim}$ on the semigroup
$\overline{\mathscr{B}}(G)$ as follows:
\begin{itemize}
  \item[$(i)$]
   $\left(\alpha^{(a,b)}_{(c,d)},\alpha^{(p,q)}_{(r,s)}\right)
   \in\overline{\sim}$ if and only if
   $\left(\alpha^{(a,b)}_{(c,d)},\alpha^{(p,q)}_{(r,s)}\right)
   \in\sim$, for $\alpha^{(a,b)}_{(c,d)},\alpha^{(p,q)}_{(r,s)}\in
   \mathscr{B}(G)\subset\overline{\mathscr{B}}(G)$;

  \item[$(ii)$]
   $\left(\alpha^{(p,q)}_{(r,s)},
   \overset{_\circ}{\alpha}{}^{(p,q)}_{(r,s)}\right),
   \left(\overset{_\circ}{\alpha}{}^{(p,q)}_{(r,s)},
   \alpha^{(p,q)}_{(r,s)}\right),
   \left(\overset{_\circ}{\alpha}{}^{(p,q)}_{(r,s)},
   \overset{_\circ}{\alpha}{}^{(p,q)}_{(r,s)}\right)
   \in\overline{\sim}$, for all $p,r\in A$ and $q,s\in H$;

  \item[$(iii)$]
   $\left(\overset{_\circ}{\alpha}{}^{(a,b)}_{(c,d)},
   \overset{_\circ}{\alpha}{}^{(p,q)}_{(r,s)}\right)
   \in\overline{\sim}$ if and only if
   $\left(\alpha^{(a,b)}_{(c,d)},\alpha^{(p,q)}_{(r,s)}\right)
   \in\sim$, for $\alpha^{(a,b)}_{(c,d)},
   \alpha^{(p,q)}_{(r,s)}\in
   \mathscr{B}(G)\subset\overline{\mathscr{B}}(G)$ and
   $\overset{_\circ}{\alpha}{}^{(a,b)}_{(c,d)},
   \overset{_\circ}{\alpha}{}^{(p,q)}_{(r,s)}\in
   \overset{_\circ}{\mathscr{B}}(G)\subset\overline{\mathscr{B}}(G)$;

  \item[$(iv)$]
   $\left(\overset{_\circ}{\alpha}{}^{(a,b)}_{(c,d)},
   \alpha^{(p,q)}_{(r,s)}\right)
   \in\overline{\sim}$ if and only if
   $\left(\alpha^{(a,b)}_{(c,d)},\alpha^{(p,q)}_{(r,s)}\right)
   \in\sim$, for $\alpha^{(a,b)}_{(c,d)},
   \alpha^{(p,q)}_{(r,s)}\in
   \mathscr{B}(G)\subset\overline{\mathscr{B}}(G)$ and
   $\overset{_\circ}{\alpha}{}^{(a,b)}_{(c,d)}\in
   \overset{_\circ}{\mathscr{B}}(G)\subset\overline{\mathscr{B}}(G)$;

  \item[$(v)$]
   $\left(\alpha^{(a,b)}_{(c,d)},
   \overset{_\circ}{\alpha}{}^{(p,q)}_{(r,s)}\right)
   \in\overline{\sim}$ if and only if
   $\left(\alpha^{(a,b)}_{(c,d)},\alpha^{(p,q)}_{(r,s)}\right)
   \in\sim$, for $\alpha^{(a,b)}_{(c,d)},
   \alpha^{(p,q)}_{(r,s)}\in
   \mathscr{B}(G)\subset\overline{\mathscr{B}}(G)$ and
   $\overset{_\circ}{\alpha}{}^{(p,q)}_{(r,s)}\in
   \overset{_\circ}{\mathscr{B}}(G)\subset\overline{\mathscr{B}}(G)$.
\end{itemize}

Then simple verifications show that $\overline{\sim}$ is a
congruence on the semigroup $\overline{\mathscr{B}}(G)$, and
moreover the quotient semigroup
$\overline{\mathscr{B}}(G)/{\overline{\sim}}$ is isomorphic to the
quotient semigroup $\mathscr{B}(G)/\sim$. This implies that the
congruence $\overline{\sim}$ is different from
$\sim_{\mathfrak{id}}$. This contradicts that $\sim_{\mathfrak{id}}$
is a unique non-trivial non-group congruence on the semigroup
$\overline{\mathscr{B}}(G)$. The obtained contradiction implies
implication $(iv)\Rightarrow(i)$.

The proof of implication $(v)\Rightarrow(i)$ is similar to
implication $(iv)\Rightarrow(i)$.
\end{proof}

\begin{theorem}\label{theorem-2.13}
Let $G$ be a linearly ordered group and $\mathfrak{C}_{mg}$ be the
least group congruence on the semigroup $\mathscr{B}(G)$
$(\mbox{resp., } \mathscr{B}^+(G))$. Then the quotient semigroup
$\mathscr{B}(G)/{\mathfrak{C}_{mg}}$ $(\mbox{resp., }
\mathscr{B}^+(G)/{\mathfrak{C}_{mg}})$ is antiisomorphic to the
group $G$.
\end{theorem}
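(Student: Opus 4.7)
The strategy is to exhibit an explicit surjective antihomomorphism $\varphi\colon\mathscr{B}(G)\to G$ whose kernel congruence coincides with $\mathfrak{C}_{mg}$; this immediately yields the claimed antiisomorphism $\mathscr{B}(G)/\mathfrak{C}_{mg}\to G$. Define
\[
\varphi(\alpha_h^g)=h^{-1}g.
\]
By formula~(\ref{formula-1.1}), $\alpha_h^g\cdot\alpha_l^k=\alpha_b^a$ with $a=(h\vee k)h^{-1}g$ and $b=(h\vee k)k^{-1}l$, so
\[
\varphi(\alpha_h^g\cdot\alpha_l^k)=b^{-1}a=l^{-1}k(h\vee k)^{-1}(h\vee k)h^{-1}g=l^{-1}kh^{-1}g=\varphi(\alpha_l^k)\cdot\varphi(\alpha_h^g),
\]
showing that $\varphi$ is an antihomomorphism. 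Surjectivity is immediate from $\varphi(\alpha_e^x)=x$ for every $x\in G$.

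Next I would identify the kernel congruence $\ker\varphi$ with $\mathfrak{C}_{mg}$. Since the map induced by $\varphi$ on $\mathscr{B}(G)/\!\ker\varphi$ is a bijective antihomomorphism onto $G$, this quotient is itself a group, so $\ker\varphi$ is a group congruence and therefore $\mathfrak{C}_{mg}\subseteq\ker\varphi$. Conversely, suppose $\varphi(\alpha_h^g)=\varphi(\alpha_l^k)$, i.e.\ $h^{-1}g=l^{-1}k$. Pick any $f\in G$ with $f\geqslant h$ and $f\geqslant l$ (for instance $f=h\vee l$); then~(\ref{formula-1.1}) gives
\[
\alpha_h^g\cdot\alpha_f^f=\alpha_f^{fh^{-1}g}\qquad\hbox{and}\qquad\alpha_l^k\cdot\alpha_f^f=\alpha_f^{fl^{-1}k},
\]
which coincide by the hypothesis. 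Hence $\alpha_h^g\mathfrak{C}_{mg}\alpha_l^k$ via the idempotent $\alpha_f^f$, so $\ker\varphi=\mathfrak{C}_{mg}$, and the induced map is the desired antiisomorphism.

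For the semigroup $\mathscr{B}^+(G)$ the same formula $\varphi(\alpha_h^g)=h^{-1}g$ defines an antihomomorphism by the identical calculation; the only point needing separate verification is surjectivity onto the whole of $G$. If $x\geqslant e$, use $\alpha_e^x\in\mathscr{B}^+(G)$; if $x<e$, then $x^{-1}>e$, so $\alpha_{x^{-1}}^e\in\mathscr{B}^+(G)$ and $\varphi(\alpha_{x^{-1}}^e)=x$. The identification $\ker\varphi=\mathfrak{C}_{mg}$ then proceeds exactly as above, using $f=h\vee l\in G^+$. The only real content of the proof lies in guessing the correct antihomomorphism $\varphi(\alpha_h^g)=h^{-1}g$; once it is chosen, the remaining verifications reduce to straightforward manipulations of formula~(\ref{formula-1.1}).
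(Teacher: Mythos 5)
Your proof is correct and follows essentially the same route as the paper: both use the map $\alpha^g_h\mapsto h^{-1}g$ and identify its kernel with $\mathfrak{C}_{mg}$ by multiplying with idempotents $\alpha^f_f$ for $f$ above $h$ and $l$. The only cosmetic difference is that you obtain the inclusion $\mathfrak{C}_{mg}\subseteq\ker\varphi$ from minimality of the least group congruence, whereas the paper derives the explicit criterion $b^{-1}a=d^{-1}c$ directly from Lemma~III.5.2 of Petrich before defining the map; both are sound.
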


\begin{proof}
By Proposition~\ref{proposition-1.1}$(ii)$ and Lemma~III.5.2 from
\cite{Petrich1984} we have that elements $\alpha^a_b$ and
$\alpha^c_d$ are $\mathfrak{C}_{mg}$-equivalent in $\mathscr{B}(G)$
(resp., in $\mathscr{B}^+(G)$) if and only if there exists $x\in G$
such that $\alpha^a_b\cdot\alpha^x_x=\alpha^c_d\cdot\alpha^x_x$.
Then Proposition~\ref{proposition-2.1}$(i)$ implies that
$\alpha^a_b\cdot\alpha^g_g=\alpha^c_d\cdot\alpha^g_g$ for all $g\in
G$ such that $g\geqslant x$ in $G$. If $g\geqslant b$ and
$g\geqslant d$ then the definition of the semigroup operation in
$\mathscr{B}(G)$ (resp., in $\mathscr{B}^+(G)$) implies that
$\alpha^a_b\cdot\alpha^g_g=\alpha^{g\cdot b^{-1}\cdot a}_g$ and
$\alpha^c_d\cdot\alpha^g_g=\alpha^{g\cdot d^{-1}\cdot c}_g$, and
since $G$ is a group we get that $b^{-1}\cdot a=d^{-1}\cdot c$.

Conversely, suppose that $\alpha^a_b$ and $\alpha^c_d$ are elements
of the semigroup $\mathscr{B}(G)$ (resp., in $\mathscr{B}^+(G)$)
such that $b^{-1}\cdot a=d^{-1}\cdot c$. Then for any element $g\in
G$ such that $g\geqslant b$ and $g\geqslant d$ in $G$ we have that
$\alpha^a_b\cdot\alpha^g_g=\alpha^{g\cdot b^{-1}\cdot a}_g$ and
$\alpha^c_d\cdot\alpha^g_g=\alpha^{g\cdot d^{-1}\cdot c}_g$, and
hence since $b^{-1}\cdot a=d^{-1}\cdot c$ we get that
$\alpha^a_b\mathfrak{C}_{mg}\alpha^c_d$. Therefore,
$\alpha^a_b\mathfrak{C}_{mg}\alpha^c_d$ in $\mathscr{B}(G)$ (resp.,
in $\mathscr{B}^+(G)$) if and only if $b^{-1}\cdot a=d^{-1}\cdot c$.

We determine a map $\mathfrak{f}\colon\mathscr{B}(G)\rightarrow G$
(resp., $\mathfrak{f}\colon\mathscr{B}^+(G)\rightarrow G$) by the
formula $(\alpha^a_b)\mathfrak{f}=b^{-1}\cdot a$, for $a,b\in G$.
Then we have that

\begin{equation*}
\begin{split}
  (\alpha^a_b\cdot\alpha^c_d)\mathfrak{f}= &
  \left\{
  \begin{array}{ll}
    (\alpha^{c\cdot b^{-1}\cdot a}_d)\mathfrak{f}, & \hbox{if } b<c; \\
    (\alpha^a_d)\mathfrak{f}, & \hbox{if } b=c; \\
    (\alpha^a_{b\cdot c^{-1}\cdot d})\mathfrak{f}, & \hbox{if } b>c,
  \end{array}
\right.
 {=} \left\{
  \begin{array}{ll}
    d^{-1}\cdot c\cdot b^{-1}\cdot a, & \hbox{if } b<c; \\
    d^{-1}\cdot a, & \hbox{if } b=c; \\
    (b\cdot c^{-1}\cdot d)^{-1}\cdot a, & \hbox{if } b>c,
  \end{array}
\right.{=} \\
= &\left\{
  \begin{array}{ll}
    d^{-1}\cdot c\cdot b^{-1}\cdot a, & \hbox{if } b<c; \\
    d^{-1}\cdot c\cdot b^{-1}\cdot a, & \hbox{if } b=c; \\
    d^{-1}\cdot c\cdot b^{-1}\cdot a, & \hbox{if } b>c,
  \end{array}
\right.
  =
     d^{-1}\cdot c\cdot b^{-1}\cdot a=
 (\alpha^c_d)\mathfrak{f}\cdot(\alpha^a_b)\mathfrak{f},
\end{split}
\end{equation*}
for $a,b,c,d\in G$. This completes the proof of the theorem.
\end{proof}

H\"{o}lder's Theorem and Theorem~\ref{theorem-2.13} imply the
following:

\begin{theorem}\label{theorem-2.14}
Let $G$ be an archimedean linearly ordered group and
$\mathfrak{C}_{mg}$ be the least group congruence on the semigroup
$\mathscr{B}(G)$ $(\mbox{resp., } \mathscr{B}^+(G))$. Then the
quotient semigroup $\mathscr{B}(G)/{\mathfrak{C}_{mg}}$
$(\mbox{resp., } \mathscr{B}^+(G)/{\mathfrak{C}_{mg}})$ is
isomorphic to the group $G$.
\end{theorem}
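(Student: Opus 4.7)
The plan is to combine Theorem~\ref{theorem-2.13} with H\"{o}lder's Theorem in a one-line observation: an archimedean linearly ordered group is commutative, and for a commutative target any antiisomorphism is automatically an isomorphism.

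More precisely, I would first invoke H\"{o}lder's Theorem, as quoted just before Theorem~\ref{theorem-2.5}, to conclude that $G$ is isomorphic (as a linearly ordered group) to a subgroup of the additive group $(\mathbb{R},+)$. In particular $G$ is commutative. Next, by Theorem~\ref{theorem-2.13}, the map $\mathfrak{f}\colon\mathscr{B}(G)\rightarrow G$ defined by $(\alpha^a_b)\mathfrak{f}=b^{-1}\cdot a$ induces, via the congruence $\mathfrak{C}_{mg}$, an antiisomorphism $\widetilde{\mathfrak{f}}\colon\mathscr{B}(G)/\mathfrak{C}_{mg}\rightarrow G$.

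Since $G$ is commutative, the antihomomorphism property $(x\cdot y)\widetilde{\mathfrak{f}}=(y)\widetilde{\mathfrak{f}}\cdot(x)\widetilde{\mathfrak{f}}$ immediately becomes $(y)\widetilde{\mathfrak{f}}\cdot(x)\widetilde{\mathfrak{f}}=(x)\widetilde{\mathfrak{f}}\cdot(y)\widetilde{\mathfrak{f}}$, so $\widetilde{\mathfrak{f}}$ is already a homomorphism, hence an isomorphism. The same argument works verbatim for $\mathscr{B}^+(G)$ in place of $\mathscr{B}(G)$. There is no real obstacle here; the statement is essentially a corollary of the preceding two results, and the proof is short.
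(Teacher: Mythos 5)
Your argument is exactly the one the paper intends: Theorem~\ref{theorem-2.14} is stated there as an immediate consequence of H\"{o}lder's Theorem and Theorem~\ref{theorem-2.13}, with no further proof given, and your observation that commutativity of $G$ turns the antiisomorphism of Theorem~\ref{theorem-2.13} into an isomorphism is precisely the missing one-line justification. The proposal is correct and matches the paper's approach.
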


Theorems~\ref{theorem-2.5}, \ref{theorem-2.6} and \ref{theorem-2.14}
imply the following:

\begin{corollary}\label{corollary-2.15}
Let $G$ be an archimedean linearly ordered group and
$\mathfrak{C}_{mg}$ be the least group congruence on the semigroup
$\mathscr{B}(G)$ $(\mbox{resp., } \mathscr{B}^+(G))$. Then every
non-isomorphic image of the semigroup $\mathscr{B}(G)$
$(\mbox{resp., } \mathscr{B}^+(G))$ is isomorphic to some
homomorphic image of the group $G$.
\end{corollary}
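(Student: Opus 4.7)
The plan is to recast ``non-isomorphic image'' as a homomorphic image whose kernel congruence is distinct from the identity congruence, and then to combine the results on non-trivial congruences with the identification of the minimum group congruence quotient already established.

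More precisely, I would start with an arbitrary surjective homomorphism $\varphi\colon \mathscr{B}(G)\rightarrow T$ (resp.\ $\varphi\colon \mathscr{B}^+(G)\rightarrow T$) that is not an isomorphism, and let $\mathfrak{C}=\ker\varphi$ be its kernel congruence. Since $\varphi$ is not injective, $\mathfrak{C}\neq\Delta_{\mathscr{B}(G)}$. I would then dispose of the degenerate case $\mathfrak{C}=\Omega_{\mathscr{B}(G)}$ immediately: the quotient is the one-element semigroup, which is (trivially) a homomorphic image of the group~$G$.

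In the remaining case $\mathfrak{C}$ is a non-trivial congruence on $\mathscr{B}(G)$ (resp.\ $\mathscr{B}^+(G)$). Since $G$ is archimedean, Theorem~\ref{theorem-2.6} (resp.\ Theorem~\ref{theorem-2.5}) applies and forces $\mathfrak{C}$ to be a group congruence. Because $\mathfrak{C}_{mg}$ is the \emph{least} group congruence, we have $\mathfrak{C}_{mg}\subseteq\mathfrak{C}$, so the natural homomorphism $\Phi_{\mathfrak{C}_{mg}}\colon\mathscr{B}(G)\rightarrow\mathscr{B}(G)/\mathfrak{C}_{mg}$ factors through $\Phi_\mathfrak{C}$: there is a surjective homomorphism
\begin{equation*}
\psi\colon\mathscr{B}(G)/\mathfrak{C}_{mg}\rightarrow\mathscr{B}(G)/\mathfrak{C}\cong T
\end{equation*}
with $\Phi_\mathfrak{C}=\Phi_{\mathfrak{C}_{mg}}\circ\psi$. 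Finally Theorem~\ref{theorem-2.14} identifies the domain of $\psi$ with the group $G$, so $T$ is (isomorphic to) a homomorphic image of $G$.

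No genuine obstacle is anticipated, since the ingredients are all in place; the only care needed is the bookkeeping that ``non-isomorphic image'' covers both the universal and intermediate cases, and that the factorisation through $\mathscr{B}(G)/\mathfrak{C}_{mg}$ is well defined because $\mathfrak{C}_{mg}$ really is contained in every group congruence (which in turn is the standard characterisation of the least group congruence on an inverse semigroup cited in the introduction). The same argument applied verbatim to $\mathscr{B}^+(G)$ yields the parenthetical statement.
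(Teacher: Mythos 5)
Your argument is correct and is precisely the chain of implications the paper has in mind: the paper gives no explicit proof, stating only that the corollary follows from Theorems~\ref{theorem-2.5}, \ref{theorem-2.6} and \ref{theorem-2.14}, and your write-up fills in exactly that reasoning (non-identity kernel congruence, the universal case handled trivially, the non-trivial case forced to be a group congruence, containment of $\mathfrak{C}_{mg}$ in every group congruence, and the resulting factorisation through $\mathscr{B}(G)/\mathfrak{C}_{mg}\cong G$). Nothing is missing.
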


\begin{theorem}\label{theorem-2.16}
Let $G$ be a linearly ordered $d$-group and $\mathfrak{C}_{mg}$ be
the least group congruence on the semigroup
$\overline{\mathscr{B}}(G)$ $(\mbox{resp., }
\overline{\mathscr{B}}\,^+(G))$. Then the quotient semigroup
$\overline{\mathscr{B}}(G)/{\mathfrak{C}_{mg}}$ $(\mbox{resp., }
\overline{\mathscr{B}}\,^+(G)/{\mathfrak{C}_{mg}})$ is
antiisomorphic to the group $G$.
\end{theorem}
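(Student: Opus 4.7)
The plan is to mirror the strategy used in Theorem~\ref{theorem-2.13}. I would define a map $\mathfrak{f}\colon\overline{\mathscr{B}}(G)\rightarrow G$ (resp., $\mathfrak{f}\colon\overline{\mathscr{B}}\,^+(G)\rightarrow G$) by the formulae $(\alpha^a_b)\mathfrak{f}=(\overset{_\circ}{\alpha}{}^a_b)\mathfrak{f}=b^{-1}\cdot a$, for $a,b\in G$ (resp., $a,b\in G^+$), and then show that $\mathfrak{f}$ is a surjective antihomomorphism whose kernel coincides with the least group congruence $\mathfrak{C}_{mg}$.

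First I would verify that $\mathfrak{f}$ is an antihomomorphism. Using formulas~(\ref{formula-1.1}) and (\ref{formula-1.2}) together with the multiplication tables established in the proof of Proposition~\ref{proposition-1.5}, there are four cases, depending on whether each of the two factors belongs to $\mathscr{B}(G)$ or $\overset{_\circ}{\mathscr{B}}(G)$, each splitting into three subcases according to the order between the middle entries. In every subcase a direct computation identical in spirit to the one in the proof of Theorem~\ref{theorem-2.13} yields
\begin{equation*}
    (\alpha\cdot\beta)\mathfrak{f}=d^{-1}\cdot c\cdot b^{-1}\cdot a=(\beta)\mathfrak{f}\cdot(\alpha)\mathfrak{f},
\end{equation*}
where $\alpha$ carries upper index $a$, lower index $b$ and $\beta$ carries upper index $c$, lower index $d$. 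Surjectivity is straightforward: every $g\in G$ with $g\geqslant e$ is realized by $(\alpha^g_e)\mathfrak{f}=g$, and every $g<e$ by $(\alpha^e_{g^{-1}})\mathfrak{f}=g$, so the image, even within $\mathscr{B}^+(G)$, exhausts $G$.

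Next I would show that $\ker\mathfrak{f}$ equals $\mathfrak{C}_{mg}$. Since $\overline{\mathscr{B}}(G)$ is inverse (Proposition~\ref{proposition-1.5}), the description of $\mathfrak{C}_{mg}$ recalled in the introduction applies: $\alpha\mathfrak{C}_{mg}\beta$ if and only if $\alpha\varepsilon=\beta\varepsilon$ for some $\varepsilon\in E(\overline{\mathscr{B}}(G))$. One direction is immediate: applying $\mathfrak{f}$ to $\alpha\varepsilon=\beta\varepsilon$ and cancelling $(\varepsilon)\mathfrak{f}$ in $G$ forces $(\alpha)\mathfrak{f}=(\beta)\mathfrak{f}$. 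Conversely, if $(\alpha)\mathfrak{f}=(\beta)\mathfrak{f}$, choose $g\in G$ strictly greater than the lower indices of both $\alpha$ and $\beta$, and put $\varepsilon=\alpha^g_g$. The multiplication formulas from Proposition~\ref{proposition-1.5} show that regardless of whether $\alpha$ and $\beta$ are of circle- or non-circle-type, both $\alpha\cdot\varepsilon$ and $\beta\cdot\varepsilon$ land in $\mathscr{B}(G)$ and have the common form $\alpha^{g\cdot b^{-1}\cdot a}_g=\alpha^{g\cdot d^{-1}\cdot c}_g$, so they coincide.

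The main obstacle is the mixed-case bookkeeping required to check the antihomomorphism identity across all element-type combinations and order sub-cases. However, the crucial observation is that the product's upper-to-lower ratio reduces to $d^{-1}\cdot c\cdot b^{-1}\cdot a$ regardless of whether one or both factors are of circle-type, because the index formulae $a=(h\vee k)\cdot h^{-1}\cdot g$, $b=(h\vee k)\cdot k^{-1}\cdot l$ from Proposition~\ref{proposition-1.5} are structurally identical in all four cases. Once $\mathfrak{f}$ is identified as a surjective antihomomorphism with $\ker\mathfrak{f}=\mathfrak{C}_{mg}$, it induces the desired antiisomorphism $\overline{\mathscr{B}}(G)/\mathfrak{C}_{mg}\to G$. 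The argument for $\overline{\mathscr{B}}\,^+(G)$ is analogous, with all indices drawn from $G^+$ and the cofinal idempotent chosen in $E(\mathscr{B}^+(G))$ above both lower indices.
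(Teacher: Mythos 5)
Your proposal is correct and follows essentially the same route as the paper: both define the map $(\alpha^a_b)\mathfrak{f}=(\overset{_\circ}{\alpha}{}^a_b)\mathfrak{f}=b^{-1}\cdot a$, verify the antihomomorphism identity case by case using the multiplication rules from Proposition~\ref{proposition-1.5}, and identify $\mathfrak{C}_{mg}$ with the relation $b^{-1}\cdot a=d^{-1}\cdot c$ via the characterization of the least group congruence. Your write-up is in fact slightly more explicit than the paper's on the kernel identification, which the paper dispatches by reference to Theorem~\ref{theorem-2.13} and Proposition~\ref{proposition-2.10}.
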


\begin{proof}
Similar arguments as in the proofs of Theorem~\ref{theorem-2.13} and
Proposition~\ref{proposition-2.10} show that the following
assertions are equivalent:
\begin{itemize}
  \item[$(i)$] $\alpha^a_b\mathfrak{C}_{mg}\alpha^c_d$ in
   $\overline{\mathscr{B}}(G)$ (resp., in
   $\overline{\mathscr{B}}\,^+(G)$);

  \item[$(ii)$] $\alpha^a_b\mathfrak{C}_{mg}
   \overset{_\circ}{\alpha}{}^c_d$ in
   $\overline{\mathscr{B}}(G)$ (resp., in
   $\overline{\mathscr{B}}\,^+(G)$);

  \item[$(iii)$] $\overset{_\circ}{\alpha}{}^a_b\mathfrak{C}_{mg}
   \overset{_\circ}{\alpha}{}^c_d$ in
   $\overline{\mathscr{B}}(G)$ (resp., in
   $\overline{\mathscr{B}}\,^+(G)$);

  \item[$(iv)$] $b^{-1}\cdot a=d^{-1}\cdot c$.
\end{itemize}

We determine a map $\mathfrak{f}\colon\mathscr{B}(G)\rightarrow G$
(resp., $\mathfrak{f}\colon\mathscr{B}^+(G)\rightarrow G$) by the
formulae $(\alpha^a_b)\mathfrak{f}=b^{-1}\cdot a$ and
$(\overset{_\circ}{\alpha}{}^a_b)\mathfrak{f}=b^{-1}\cdot a$, for
$a,b\in G$. Then we have that
\begin{equation*}
    (\alpha^a_b\cdot\alpha^c_d)\mathfrak{f}=
    (\alpha^c_d)\mathfrak{f}\cdot(\alpha^a_b)\mathfrak{f},
\end{equation*}
\begin{equation*}
\begin{split}
  (\overset{_\circ}{\alpha}{}^a_b\cdot
  \overset{_\circ}{\alpha}{}^c_d)\mathfrak{f}= &
  \left\{
  \begin{array}{ll}
    (\overset{_\circ}{\alpha}{}^{c\cdot b^{-1}\cdot a}_d)\mathfrak{f},
          & \hbox{if } b<c; \\
    (\overset{_\circ}{\alpha}{}^a_d)\mathfrak{f},
          & \hbox{if } b=c; \\
    (\overset{_\circ}{\alpha}{}^a_{b\cdot c^{-1}\cdot d})\mathfrak{f},
          & \hbox{if } b>c,
  \end{array}
\right.
 {=} \left\{
  \begin{array}{ll}
    d^{-1}\cdot c\cdot b^{-1}\cdot a, & \hbox{if } b<c; \\
    d^{-1}\cdot a, & \hbox{if } b=c; \\
    (b\cdot c^{-1}\cdot d)^{-1}\cdot a, & \hbox{if } b>c,
  \end{array}
\right.{=} \\
    = &\ \left\{
  \begin{array}{ll}
    d^{-1}\cdot c\cdot b^{-1}\cdot a, & \hbox{if } b<c; \\
    d^{-1}\cdot c\cdot b^{-1}\cdot a, & \hbox{if } b=c; \\
    d^{-1}\cdot c\cdot b^{-1}\cdot a, & \hbox{if } b>c,
  \end{array}
\right.
 = d^{-1}\cdot c\cdot b^{-1}\cdot a=
 (\overset{_\circ}{\alpha}{}^c_d)\mathfrak{f}\cdot
 (\overset{_\circ}{\alpha}{}^a_b)\mathfrak{f},
\end{split}
\end{equation*}
\begin{equation*}
\begin{split}
  (\alpha^a_b\cdot
  \overset{_\circ}{\alpha}{}^c_d)\mathfrak{f}= &
  \left\{
  \begin{array}{ll}
    (\overset{_\circ}{\alpha}{}^{c\cdot b^{-1}\cdot a}_d)\mathfrak{f},
          & \hbox{if } b<c; \\
    (\overset{_\circ}{\alpha}{}^a_d)\mathfrak{f},
          & \hbox{if } b=c; \\
    (\alpha^a_{b\cdot c^{-1}\cdot d})\mathfrak{f},
          & \hbox{if } b>c,
  \end{array}
\right.
 {=} \left\{
  \begin{array}{ll}
    d^{-1}\cdot c\cdot b^{-1}\cdot a, & \hbox{if } b<c; \\
    d^{-1}\cdot a, & \hbox{if } b=c; \\
    (b\cdot c^{-1}\cdot d)^{-1}\cdot a, & \hbox{if } b>c,
  \end{array}
\right.{=}\\
    = &\,  \left\{
  \begin{array}{ll}
    d^{-1}\cdot c\cdot b^{-1}\cdot a, & \hbox{if } b<c; \\
    d^{-1}\cdot c\cdot b^{-1}\cdot a, & \hbox{if } b=c; \\
    d^{-1}\cdot c\cdot b^{-1}\cdot a, & \hbox{if } b>c,
  \end{array}
\right.
 = d^{-1}\cdot c\cdot b^{-1}\cdot a=
 (\alpha^c_d)\mathfrak{f}\cdot
 (\overset{_\circ}{\alpha}{}^a_b)\mathfrak{f},
\end{split}
\end{equation*}
\begin{equation*}
\begin{split}
  (\overset{_\circ}{\alpha}{}^a_b\cdot
  \alpha^c_d)\mathfrak{f}= &
  \left\{
  \begin{array}{ll}
    (\alpha^{c\cdot b^{-1}\cdot a}_d)\mathfrak{f},
          & \hbox{if } b<c; \\
    (\overset{_\circ}{\alpha}{}^a_d)\mathfrak{f},
          & \hbox{if } b=c; \\
    (\overset{_\circ}{\alpha}{}^a_{b\cdot c^{-1}\cdot d})\mathfrak{f},
          & \hbox{if } b>c,
  \end{array}
\right.
 {=} \left\{
  \begin{array}{ll}
    d^{-1}\cdot c\cdot b^{-1}\cdot a, & \hbox{if } b<c; \\
    d^{-1}\cdot a, & \hbox{if } b=c; \\
    (b\cdot c^{-1}\cdot d)^{-1}\cdot a, & \hbox{if } b>c,
  \end{array}
\right.{=}\\
    = & \,\left\{
  \begin{array}{ll}
    d^{-1}\cdot c\cdot b^{-1}\cdot a, & \hbox{if } b<c; \\
    d^{-1}\cdot c\cdot b^{-1}\cdot a, & \hbox{if } b=c; \\
    d^{-1}\cdot c\cdot b^{-1}\cdot a, & \hbox{if } b>c,
  \end{array}
\right.
 = d^{-1}\cdot c\cdot b^{-1}\cdot a=
 (\overset{_\circ}{\alpha}{}^c_d)\mathfrak{f}\cdot
 (\alpha^a_b)\mathfrak{f},
\end{split}
\end{equation*}
for $a,b,c,d\in G$.  This completes the proof of the theorem.
\end{proof}

H\"{o}lder's Theorem and Theorem~\ref{theorem-2.16} imply the
following:

\begin{theorem}\label{theorem-2.17}
Let $G$ be an archimedean linearly ordered $d$-group and
$\mathfrak{C}_{mg}$ be the least group congruence on the semigroup
$\overline{\mathscr{B}}(G)$ $(\mbox{resp., }
\overline{\mathscr{B}}\,^+(G))$. Then the quotient semigroup
$\overline{\mathscr{B}}(G)/{\mathfrak{C}_{mg}}$ $(\mbox{resp., }
\overline{\mathscr{B}}\,^+(G))/{\mathfrak{C}_{mg}})$ is isomorphic
to the group $G$.
\end{theorem}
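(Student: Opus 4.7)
The plan is to derive this result as an immediate corollary of Theorem~\ref{theorem-2.16} together with H\"{o}lder's Theorem, so essentially no new work beyond assembling these two ingredients is required. First I would invoke Theorem~\ref{theorem-2.16} to conclude that the quotient semigroup $\overline{\mathscr{B}}(G)/{\mathfrak{C}_{mg}}$ (respectively $\overline{\mathscr{B}}\,^+(G)/{\mathfrak{C}_{mg}}$) is antiisomorphic to the group $G$, via a map $\widetilde{\mathfrak{f}}$ induced by the map $\mathfrak{f}$ constructed in the proof of that theorem.

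Next I would apply H\"{o}lder's Theorem: since $G$ is an archimedean linearly ordered group, it embeds as a subgroup of the additive group of real numbers, and is therefore commutative. Hence any antiisomorphism $\varphi\colon S\to G$ is automatically a (semigroup) isomorphism, because for $s,t\in S$ one has
\begin{equation*}
    (st)\varphi=(t)\varphi\cdot(s)\varphi=(s)\varphi\cdot(t)\varphi,
\end{equation*}
using commutativity of $G$ in the last equality. Applying this observation to $\widetilde{\mathfrak{f}}$ yields the desired isomorphism $\overline{\mathscr{B}}(G)/{\mathfrak{C}_{mg}}\cong G$, and the argument for $\overline{\mathscr{B}}\,^+(G)$ is identical.

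Since every step is a direct citation or a one-line reduction to commutativity, there is essentially no obstacle here; the only care needed is to verify that the antiisomorphism produced by Theorem~\ref{theorem-2.16} is surjective onto $G$ (so that when read as an isomorphism it is still a bijection onto the full group), but this is already built into the proof of Theorem~\ref{theorem-2.16} since $(\alpha^{a}_{e})\mathfrak{f}=a$ for every $a\in G$. Thus the theorem follows.
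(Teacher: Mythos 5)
Your proposal is correct and follows exactly the paper's own (one-line) derivation: the paper likewise obtains Theorem~\ref{theorem-2.17} by combining Theorem~\ref{theorem-2.16} with H\"{o}lder's Theorem, using that an archimedean linearly ordered group embeds in $(\mathbb{R},+)$ and is therefore commutative, so the antiisomorphism is an isomorphism. No further comment is needed.
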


Theorems~\ref{theorem-2.11} and \ref{theorem-2.17} imply the
following:

\begin{corollary}\label{corollary-2.18}
Let $G$ be an archimedean linearly ordered $d$-group, $T$ be a
semigroup and $h\colon \overline{\mathscr{B}}(G)\rightarrow T$
$\left(\mbox{resp., } h\colon\overline{\mathscr{B}}\,^+(G)
\rightarrow T\right)$ be a homomorphism. Then only one of the
following conditions holds:
\begin{itemize}
  \item[$(i)$] $h$ is a monomorphism;

  \item[$(ii)$] the image $\left(\overline{\mathscr{B}}(G)\right)h$
   $\left(\mbox{resp., }
   \left(\overline{\mathscr{B}}\,^+(G)\right)h\right)$ is isomorphic
   to some homomorphic image of the group $G$;

  \item[$(iii)$] the image $\left(\overline{\mathscr{B}}(G)\right)h$
   $\left(\mbox{resp., }
   \left(\overline{\mathscr{B}}\,^+(G)\right)h\right)$ is isomorphic
   to the semigroup
$\mathscr{B}(G)$ $\left(\mbox{resp., }\mathscr{B}\,^+(G)\right)$.
\end{itemize}
\end{corollary}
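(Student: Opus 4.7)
The plan is to pass from the homomorphism $h$ to its kernel congruence and apply the classification of congruences on $\overline{\mathscr{B}}(G)$ given by Theorem~\ref{theorem-2.11}, together with the identification of the least group congruence quotient by Theorem~\ref{theorem-2.17}. Concretely, let $\mathfrak{C}_h$ denote the kernel congruence of $h$ on $\overline{\mathscr{B}}(G)$; then the image $(\overline{\mathscr{B}}(G))h$ is isomorphic to the quotient semigroup $\overline{\mathscr{B}}(G)/\mathfrak{C}_h$, so it suffices to trichotomize according to what kind of congruence $\mathfrak{C}_h$ is.

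First, if $\mathfrak{C}_h=\Delta_{\overline{\mathscr{B}}(G)}$, then $h$ is injective and case $(i)$ holds. Second, if $\mathfrak{C}_h$ is non-trivial in the sense of the paper (i.e., neither $\Delta$ nor $\Omega$), then by Theorem~\ref{theorem-2.11} the quotient $\overline{\mathscr{B}}(G)/\mathfrak{C}_h$ is either a group or is isomorphic to $\mathscr{B}(G)$; the latter option gives case $(iii)$. Third, if the quotient is a group --- a case which also absorbs the degenerate possibility $\mathfrak{C}_h=\Omega_{\overline{\mathscr{B}}(G)}$, whose quotient is the trivial group --- then $\mathfrak{C}_h$ is a group congruence, so it contains the least group congruence $\mathfrak{C}_{mg}$, and by the correspondence theorem the quotient $\overline{\mathscr{B}}(G)/\mathfrak{C}_h$ is a homomorphic image of $\overline{\mathscr{B}}(G)/\mathfrak{C}_{mg}$. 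By Theorem~\ref{theorem-2.17} the latter is isomorphic to $G$, so $(\overline{\mathscr{B}}(G))h$ is a homomorphic image of $G$ and case $(ii)$ holds.

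It remains to verify that the three cases are mutually exclusive. Since $\overline{\mathscr{B}}(G)$ has by Proposition~\ref{proposition-2.8}$(iii)$ infinitely many idempotents, it is neither a group nor isomorphic to $\mathscr{B}(G)$ (which by Proposition~\ref{proposition-2.8}$(viii)$ forms a single $\mathscr{D}$-class inside $\overline{\mathscr{B}}(G)$, whereas $\overline{\mathscr{B}}(G)$ itself has exactly two $\mathscr{D}$-classes); hence $(i)$ is incompatible with $(ii)$ and $(iii)$. Likewise $\mathscr{B}(G)$ has more than one idempotent and so is not a group, which separates $(ii)$ from $(iii)$. The verbatim same argument, replacing $\overline{\mathscr{B}}(G)$ by $\overline{\mathscr{B}}\,^+(G)$ and $\mathscr{B}(G)$ by $\mathscr{B}\,^+(G)$ throughout, handles the positive-cone case.

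The argument is essentially a bookkeeping step on top of the earlier theorems; the only points that require care are the treatment of the universal congruence inside case $(ii)$ and the mutual-exclusivity check, which is the place where the structural results from Section~2 (the existence of distinct idempotents and the two-$\mathscr{D}$-class decomposition of $\overline{\mathscr{B}}(G)$) do the actual work.
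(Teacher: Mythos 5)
Your argument is correct and is essentially the paper's own route: the paper states Corollary~\ref{corollary-2.18} as an immediate consequence of Theorems~\ref{theorem-2.11} and \ref{theorem-2.17}, and your proof is precisely the intended filling-in of that implication (kernel congruence, trichotomy via Theorem~\ref{theorem-2.11}, and the factorization through $\mathfrak{C}_{mg}$ via Theorem~\ref{theorem-2.17}). Your explicit treatment of the universal congruence and the mutual-exclusivity check via idempotent counts and the two-$\mathscr{D}$-class structure is a welcome bit of extra care that the paper leaves implicit.
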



\section*{Acknowledgements}

This research was supported by Estonian Science Foundation an
co-funded by Marie Curie Action, grant ERMOS36 (GMTMM0036J).

The authors are grateful to the referee for several comments and
suggestions which have considerably improved the original version of
the manuscript.



\begin{thebibliography}{20}

\bibitem{Andersen1952} O.~Andersen, \emph{Ein Bericht \"{u}ber die
Struktur abstrakter Halbgruppen}, PhD Thesis, Hamburg, 1952.

\bibitem{Ahre1981}
K. R. Ahre, \emph{Locally compact bisimple inverse semigroups},
Semigroup Forum \textbf{22}:4 (1981), 387---389.

\bibitem{Ahre1983}
K. R. Ahre, \emph{On the closure of $B^1_{[0,\infty)}$},
\.{I}stanbul Tek. \"{U}niv. B\"{u}l. \textbf{36}:4 (1983),
553---562.

\bibitem{Ahre1984}
K. R. Ahre, \emph{On the closure of $B^{\prime}_{[0,\infty)}$},
Semigroup Forum \textbf{28}:1--3 (1984), 377---378.

\bibitem{Ahre1986}
K. R. Ahre, \emph{On the closure of $B^{1}_{[0,\infty)}$}, Semigroup
Forum \textbf{33}:2 (1984), 269---272.

\bibitem{Ahre1989}
K. R. Ahre, \emph{On the closure of $B^2_{[0,\infty)}$},
\.{I}stanbul Tek. \"{U}niv. B\"{u}l. \textbf{42}:3 (1989),
387---390.

\bibitem{Birkhoff1973} G. Birkhoff, \emph{Lattice Theory}, Colloq.
Publ., 25, Amer. Math. Soc., 1973.

\bibitem{CliffordPreston1961-1967} A.~H.~Clifford and
G.~B.~Preston, \emph{The Algebraic Theory of Semigroups}, Vol. I.
Amer. Math. Soc. Surveys 7, 1961; Vol. II. Amer. Math. Soc.
Surveys 7, 1967.

\bibitem{Fotedar1974}
G.~L.~Fotedar, \emph{On a semigroup associated with an ordered
group}, Math. Nachr. \textbf{60} (1974), 297---302.

\bibitem{Fotedar1978}
G.~L.~Fotedar, \emph{On a class of bisimple inverse semigroups},
Riv. Mat. Univ. Parma (4) \textbf{4} (1978), 49---53.


\bibitem{Fuchs1963} L.~Fuchs, \emph{Partially Ordered
Algebraic Systems}, Pergamon Press, 1963.


\bibitem{Hahn1907}
H.~Hahn, \emph{\"{U}ber die nichtarchimedischen Gr\"{o}ssensysteme},
S.-B. Kaiserlichen Akad. Wiss. Math.-Nat. Kl. Abt. 11a \textbf{116}
(1907), 601---655.

\bibitem{Holder1901}
O.~H\"{o}lder, \emph{Die Axiome Quantit\"{a}t und die Lehre vom
Mass}, Ber. Verh. S\"{a}chs. Wiss. Leipzig, Math.--Phis, Cl.
\textbf{53} (1901), 1---64.

\bibitem{Howie1995} J. M. Howie,
\emph{Fundamentals of Semigroup Theory}, London Math. Monographs,
New Ser. 12,  Clarendon Press, Oxford, 1995.


\bibitem{KochWallace1957}
R.~J.~Koch and A.~D.~Wallace, \emph{Stability in semigroups}, Duke
Math. J. \textbf{24} (1957), 193---195.

\bibitem{KokorinKopytov1972}
A.~I.~Kokorin and V.~M.~Kopytov,  \emph{Linearly ordered groups},
Nauka, Moscow, 1972 (in Russian).

\bibitem{Korkmaz1997}
R.~Korkmaz, \emph{On the closure of $B^2_{(-\infty,+\infty)}$},
Semigroup Forum \textbf{54}:2 (1997), 166---174.

\bibitem{Korkmaz2009}
R.~Korkmaz, \emph{Dense inverse subsemigroups of a topological
inverse semigroup}, Semigroup Forum \textbf{78}:3 (2009), 528---535.

\bibitem{Munn1966}
W. D. Munn, \emph{Uniform semilattices and bisimple inverse
semigroups}, Quart. J. Math. \textbf{17}:1 (1966), 151---159.

\bibitem{Petrich1984} M.~Petrich, \emph{Inverse Semigroups}, John
Wiley $\&$ Sons, New York, 1984.

\bibitem{Wagner1952} V.~V.~Wagner, \emph{Generalized groups},
Dokl. Akad. Nauk SSSR \textbf{84} (1952), 1119---1122 (in Russian).


\end{thebibliography}
\end{document}